\newtheorem{theorem}{Theorem}
\newtheorem{proposition}{Proposition}
\newtheorem{corollary}{Corollary}
\newtheorem{lemma}{Lemma}
\newcommand{\B}{\mathbb{B}}
\newcommand{\R}{\mathbb{R}}
\def \crit {2^\star(s)}
\title{Optimal Hardy-Sobolev inequalities on compact Riemannian manifolds}
\author{Hassan Jaber}
\address{Universit\'e de Lorraine, Institut Elie Cartan de Lorraine, UMR 7502, Vand\oe uvre-l\`es-Nancy, F-54506, France.}
\email{Hassan.Jaber@univ-lorraine.fr}
\date{January 12th, 2014.}
\begin{document}
\maketitle
\begin{abstract}
Given a compact Riemannian Manifold $(M,g)$ of dimension $n\geq 3$, a point $x_0\in M$ and $s\in (0,2)$, the Hardy-Sobolev embedding yields the existence of $A,B>0$ such that
\begin{equation}\label{ineq:abstract}
\left(\int_M \frac{|u|^{\frac{2(n-s)}{n-2}}}{d_g(x,x_0)^s}\, dv_g\right)^{\frac{n-2}{n-s}}\leq A\int_M |\nabla u|_g^2\, dv_g+B\int_M u^2\, dv_g
\end{equation}
for all $u\in H_1^2(M)$. It has been proved in Jaber \cite{jaber:test:fcts} that $A\leq K(n,s)$ and that one can take any value $A>K(n,s)$ in \eqref{ineq:abstract}, where $K(n,s)$ is the best possible constant in the Euclidean Hardy-Sobolev inequality. In the present manuscript, we prove that one can take $A=K(n,s)$ in \eqref{ineq:abstract}.\end{abstract}

Let $(M,g)$ be a smooth compact Riemannian Manifold of dimension $n \geq 3$ without boundary, $d_g$ be the Riemannian distance on $M$ and $H_1^2(M)$ be the Sobolev space defined as the completion of $C^\infty(M)$ for the norm $u \mapsto \Vert u\Vert_2+\Vert\nabla u\Vert_2$. We fix $x_0 \in M, s\in (0,2)$ and let $\crit:=\frac{2(n-s)}{n-2}$ be the critical Hardy-Sobolev exponent. We endow the weighted Lebesgue space $L^p(M, d_g(\cdot, x_0)^{-s})$ with its natural norm $u\mapsto \Vert u\Vert_{p,s}:=\left(\int_M |u|^p d_g(\cdot, x_0)^{-s}\, dv_g\right)^{\frac{1}{p}}$. It follows from the Hardy-Sobolev inequality that the Sobolev space $H_1^2(M)$ is continuously embedded in the weighted Lebesgue space $L^p(M, d_g(\cdot, x_0)^{-s})$ if and only if $1\leq p\leq \crit$, 
and that this embedding is compact if and only if $1\leq p<\crit$.  From the embedding of $H_1^2(M)$ in $L^{\crit}(M, d_g(\cdot, x_0)^{-s})$, one gets the existence of $A,B>0$ such that  
\begin{equation}\label{eqt-0.1-Main-Th-Sharp-Cst-H-S-ineq}
\|u\|_{\crit,s}^2 \leq A\|\nabla u\|_2^2 + B\|u\|_2^2
\end{equation}
for all $u \in H_1^2(M)$. We let  $K(n,s)$ be the optimal constant of the Euclidean Hardy-Sobolev inequality, that is
\begin{equation}\label{eqt-0.2-Main-Th-Sharp-Cst-H-S-ineq}
K(n,s)^{-1} := \inf_{\varphi \in C_c^\infty(\R^n)\setminus\{0\}}
\frac{\int_{\R^n}|\nabla\varphi|^2 dX}{\left(\int_{\R^n}\frac{|\varphi|^{2^\star(s)}}{|X|^s}dX\right)^{\frac{2}{\crit}}}
\end{equation}
we denote by $C_c^\infty(\R^n)$ the set of $C^\infty$-smooth functions on $\R^n$ with compact support and by $|\cdot|$ the Euclidean norm in  $\R^n$. The value of $K(n,s)$ is 
$$K(n,s) = \left[(n-2)(n-s)\right]^{-1}\left(\frac{1}{2-s}\omega_{n-1}\frac{\Gamma^2(n-s/2-s)}{\Gamma(2(n-s)/2-s)}\right)^{-\frac{2-s}{n-s}},$$
where $\omega_{n-1}$ is the volume of the unit sphere on $\R^n$ and $\Gamma$ is the Euler function. It was computed independently by Aubin \cite{Aubin-1}, Rodemich \cite{Rodemich}  and 
Talenti \cite{Talenti} for the case $s=0$, and the value for $s\in (0,2)$ has been computed by Lieb (Theorem 4.3 in \cite{Lieb-1}). 
Following Hebey \cite{Hebey-3}, we define $A_0(M,g,s)$ to be the best first constant of the Riemannian Hardy-Sobolev inequality, that is
\begin{equation}\label{eqt-0.3-Main-Th-Sharp-Cst-H-S-ineq}
A_0(M,g,x_0,s) := \inf\{A > 0\, \exists B>0\hbox{ such that }\eqref{eqt-0.1-Main-Th-Sharp-Cst-H-S-ineq} \hbox{ holds for all }u\in H_1^2(M)\}.
\end{equation}
For the Sobolev inequality (\eqref{eqt-0.1-Main-Th-Sharp-Cst-H-S-ineq} when $s=0$), Aubin proved in \cite{Aubin-1} that $A_0(M,g,x_0,0) = K(n,0)$. When $s\in (0,2)$, the author proved in \cite{jaber:test:fcts} that $A_0(M,g,x_0,s) = K(n,s)$. 
In particular, for any $\epsilon > 0$, there exists $B_\epsilon > 0$ such that we have : 
\begin{equation}\label{eqt-0.4-Main-Th-Sharp-Cst-H-S-ineq}
\|u\|_{\crit,s}^2 \leq (K(n,s) + \epsilon)\int_M|\nabla u|_g^2dv_g + B_\epsilon\int_Mu^2dv_g
\end{equation}
for all $u\in H_1^2(M)$. See  Thiam \cite{Elhadji-A.T.} for a version with an additional remainder. 
The constant $B_\epsilon$ obtained in \cite{jaber:test:fcts} goes to $+\infty$ as $\epsilon\to 0$, and therefore the method used in \cite{jaber:test:fcts} does not allow to take $\epsilon=0$ in \eqref{eqt-0.4-Main-Th-Sharp-Cst-H-S-ineq}.

\medskip\noindent A natural question is to know whether the infimum $A_0(M,g,x_0,s)$ is achieved or not, that is if there exists $B > 0$ such that inequality \eqref{eqt-0.1-Main-Th-Sharp-Cst-H-S-ineq} holds for all $u\in H_1^2(M)$ with $A = K(n,s)$.
We prove the following :

\begin{theorem}\label{Main-Th-Sharp-Cst-H-S-ineq}
Let $(M,g)$ be a smooth compact Riemannian Manifold of dimension $n \geq 3$, $x_0 \in M$, and $s \in (0,2)$. We let $\crit := \frac{2(n-s)}{n-2}$ be the critical Hardy-Sobolev exponent. 
Then there exists $ B_0(M,g,s,x_0)>0$ depending on $(M,g)$ and $s$ such that
\begin{equation}\label{eqt-1.0-Main-Th-Sharp-Cst-H-S-ineq}
\left(\int_M \frac{|u|^{\crit}}{d_g(x,x_0)^s}\, dv_g\right)^{\frac{2}{\crit}}\leq K(n,s)\int_M |\nabla u|_g^2\, dv_g + B_0(M,g,s,x_0)\int_M u^2\, dv_g
\end{equation}
for all  $u \in H_1^2(M)$.
\end{theorem}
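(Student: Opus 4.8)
The plan is to argue by contradiction through a blow-up analysis centered at the fixed singular point $x_0$, in the spirit of Aubin and Hebey–Vaugon's treatment of the analogous saturated Sobolev inequality. Suppose \eqref{eqt-1.0-Main-Th-Sharp-Cst-H-S-ineq} fails for every $B>0$. Then for each integer $\alpha\geq 1$ the quantity
\[
\lambda_\alpha := \inf_{u\in H_1^2(M)\setminus\{0\}}\frac{K(n,s)\int_M|\nabla u|_g^2\,dv_g+\alpha\int_M u^2\,dv_g}{\left(\int_M d_g(\cdot,x_0)^{-s}|u|^{\crit}\,dv_g\right)^{2/\crit}}
\]
satisfies $\lambda_\alpha<1$. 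The first step is a concentration–compactness lemma adapted to the weight: splitting a minimizing sequence by the Brezis–Lieb lemma for the weighted $L^{\crit}$ norm and invoking the sharp Euclidean inequality \eqref{eqt-0.2-Main-Th-Sharp-Cst-H-S-ineq} (whose extremals are the radial Hardy–Sobolev bubbles), one checks that any nonvanishing concentrating part of the sequence costs energy at least $1$. Since $\lambda_\alpha<1$, concentration is excluded, so the infimum is attained by some $u_\alpha\geq 0$ with $\|u_\alpha\|_{\crit,s}=1$; elliptic regularity shows $u_\alpha$ is smooth away from $x_0$ and solves the Euler–Lagrange equation $K(n,s)\Delta_g u_\alpha+\alpha u_\alpha=\lambda_\alpha\, d_g(\cdot,x_0)^{-s}u_\alpha^{\crit-1}$.

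Next I would let $\alpha\to+\infty$ and analyse the asymptotics of $(u_\alpha)$. From the equation and the normalization, $\int_M u_\alpha^2\leq\lambda_\alpha/\alpha\to 0$ and $\int_M|\nabla u_\alpha|_g^2\leq\lambda_\alpha/K(n,s)$ stays bounded, so $u_\alpha\rightharpoonup 0$ in $H_1^2(M)$. Because $\crit<\frac{2n}{n-2}$, the exponent is subcritical on any region where the weight is bounded, so the only possible point of non-compactness is $x_0$, and the entire critical mass must concentrate there; testing the sharp Euclidean inequality on the concentrating profile then forces $K(n,s)\int_M|\nabla u_\alpha|_g^2\to 1$ and $\lambda_\alpha\to 1$. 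I would make this quantitative by setting $\mu_\alpha:=\max_M u_\alpha\to+\infty$, choosing the natural blow-up scale, rescaling $u_\alpha$ around $x_0$ in geodesic normal coordinates, and proving that the rescaled functions converge in $C^2_{\mathrm{loc}}(\R^n\setminus\{0\})$ to the standard extremal $U$ of \eqref{eqt-0.2-Main-Th-Sharp-Cst-H-S-ineq}.

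The technical core is the pointwise ($C^0$) theory: I would establish a uniform global estimate comparing $u_\alpha$ to the rescaled bubble, of the shape $\left(1+\mu_\alpha^{2/(n-2)}d_g(x,x_0)\right)^{n-2}u_\alpha(x)\leq C\,\mu_\alpha$ for all $x\in M$, obtained from the Green's representation for the operator $K(n,s)\Delta_g+\alpha$ together with a De Giorgi–Nash–Moser iteration. Such a \emph{fundamental estimate} guarantees that no second point of concentration appears, that the energy has no loss, and that the remainder $u_\alpha-(\text{bubble})$ is genuinely of lower order; it is what makes the final matching of asymptotic terms possible.

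Finally, I would run a Pohozaev identity for the Euler–Lagrange equation on a small geodesic ball $B_\delta(x_0)$. Using the pointwise estimates, the interior terms are shown to be of lower order, while the boundary terms, evaluated on the profile through its exact asymptotic expansion, isolate a single sign-definite geometric quantity — in high dimension a local curvature term at $x_0$, and in the borderline low-dimensional regime the mass $\mathrm{m}$ of the Green's function of the limiting operator. Matching the two sides yields an asymptotic relation of the form $\lambda_\alpha=1-c_{n,s}\,\mathrm{m}\,\mu_\alpha^{-\theta}+o(\mu_\alpha^{-\theta})$ with $c_{n,s}>0$ and $\theta>0$, so that $\lambda_\alpha<1$ dictates a definite sign for $\mathrm{m}$ that is incompatible with the sign supplied by the positive mass theorem, giving the contradiction. \textbf{I expect this last step to be the main obstacle}: deriving the precise expansion of the boundary terms and correctly identifying the dominant geometric quantity requires the full strength of the pointwise estimates, and the low-dimensional case is the most delicate, since one cannot conclude from a purely local curvature computation and must instead appeal to the positivity of the mass.
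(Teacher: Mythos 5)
Your setup and intermediate steps track the paper closely: the contradiction hypothesis producing minimizers $u_\alpha\geq 0$ with $\lambda_\alpha<K(n,s)^{-1}$ (the paper gets attainment by citing its earlier existence theorem rather than re-running concentration--compactness, but the content is the same), the weak limit zero, concentration at $x_0$, convergence of the rescalings to the Euclidean bubble, and a pointwise estimate (the paper only needs the weak bound $d_g(x,x_0)^{\frac{n}{2}-1}u_\alpha(x)\leq C$ of its Step 2.2, not your sharp bubble-comparison estimate, which is harder to prove and unnecessary here). The genuine gap is in your final step, the one you yourself flag as the main obstacle. You treat this as a fixed-potential blow-up problem, in which concentration is obstructed by a sign-definite geometric quantity (scalar curvature for $n\geq 4$, the mass together with the positive mass theorem for $n=3$). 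But the potential in the Euler--Lagrange equation is $\alpha\to+\infty$: there is no limiting operator whose Green's function mass you can invoke, and in any Pohozaev identity on $B_\delta(x_0)$ the interior term $\alpha\int_{B_\delta(x_0)}u_\alpha^2\,dv_g$ is \emph{not} of lower order --- it is the dominant term, and the contradiction must come from it overwhelming the bounded curvature and boundary contributions, i.e.\ from deriving $\alpha\leq C$. Your sketch inverts these roles. Moreover, even on its own terms the sign bookkeeping does not close: from $\lambda_\alpha=1-c_{n,s}\,\mathrm{m}\,\mu_\alpha^{-\theta}+o(\mu_\alpha^{-\theta})$ with $c_{n,s}>0$, the hypothesis $\lambda_\alpha<1$ forces $\mathrm{m}>0$, which is \emph{compatible} with positive mass, so no contradiction follows. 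And for $n=3$ the operator actually relevant here has divergent potential, whose mass tends to $-\infty$ as the potential grows (consistent with the paper's remark \eqref{min:B0}, where the mass of $\Delta_g+B_0/K(3,s)$ is nonpositive); a PMT-type positivity statement is simply unavailable for it.

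For comparison, the paper's Step 2.3 (after Druet and Hebey) avoids Pohozaev identities and geometric sign theorems entirely: it applies the sharp Euclidean Hardy--Sobolev inequality to $\eta_0 u_\alpha$ in the exponential chart at $x_0$, and uses Cartan's expansion of the metric plus the weak pointwise estimate to show that every discrepancy between Euclidean and Riemannian quantities (Dirichlet energy, weighted $L^{\crit}$ norm) is $O\bigl(\int_M u_\alpha^2\,dv_g\bigr)$. Feeding the Euler--Lagrange equation and $\lambda_\alpha<K(n,s)^{-1}$ into this comparison yields $(C-\alpha)\int_M u_\alpha^2\,dv_g\geq 0$ with $C$ independent of $\alpha$, which is absurd as $\alpha\to+\infty$. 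If you wish to keep a Pohozaev route, it can be made to work, but you must restructure it so that $\alpha\int u_\alpha^2$ is the quantity bounded below and the curvature/boundary terms are the errors; the conclusion is then $\alpha\leq C$, and neither the mass expansion nor the positive mass theorem should appear.
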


\medskip\noindent When $s=0$, Theorem \ref{Main-Th-Sharp-Cst-H-S-ineq} has been proved by Hebey-Vaugon \cite{Hebey-Vaugon} for best constant in the Sobolev embedding $H_1^2(M) \subset L^{2^\star}(M)$, $2^\star = 2n/(n-2)$. The best constant problem in the Sobolev embedding $H_1^p(M) \subset L^{p^\star}(M), p^\star = pn/(n-p)$ ($n>p>1$) has been studied by Druet \cite{Druet-1} (see also Aubin-Li \cite{aubinli}) answering a conjecture of Aubin in \cite{Aubin-1}. The corresponding question for the embedding $H_2^2(M) \subset L^{2^{\sharp}}(M)$, $2^{\sharp} = 2n/(n-4)$ has been studied by Hebey \cite{Hebey-1}, and for the Gagliardo-Nirenberg inequalities by Brouttelande \cite{brouttelande} and Ceccon-Montenegro \cite{cecconmontenegro}.





\medskip\noindent There is an important litterature about sharp constants for inequalities of Hardy-Sobolev type on domains of the Euclidean flat space $\mathbb{R}^n$. A general discussion is in the monograph \cite{ghoussoubmoradifam} by Ghoussoub-Moradifam. Hardy-Sobolev inequalities are a subfamily of the Caffarelli-Kohn-Nirenberg inequalities (see \cite{Caff-K-Nir}). The best constants and extremals for these inequalities on $\mathbb{R}^n$ are well understood in the class of radially symmetric functions (see Catrina-Wang \cite{catrinawang}, Horushi \cite{Horiushi} and Chou-Chu \cite{Chou-Chu}). However, there are situations when extremals are not radially symmetrical as discovered by Catrina-Wang \cite{catrinawang}. A historical survey on symmetry-breaking of the extremals is in Dolbeault-Esteban-Loss-Tarantello \cite{DELT}. For Hardy-Sobolev-Maz'ya inequalities \cite{Maz'ja}, we refer to Badiale-Tarantello \cite{badialetarantello}, Musina \cite{Musina} and Tertikas-Tintarev \cite{Tert-Tint}. 

\medskip\noindent A last remark is that it follows from the analysis of the author in \cite{jaber:test:fcts} that
\begin{equation}\label{min:B0}
\left\{\begin{array}{ll}
B_0(M,g,s,x_0) \geq K(n,s)\frac{(n-2)(6-s)}{12(2n-2-s)}\hbox{Scal}_g(x_0)&\hbox{ if }n\geq 4\\
\hbox{the Green's function's mass of }\Delta_g+ \frac{B_0(M,g,s,x_0)}{K(3,s)}\hbox{ is nonpositive}&\hbox{ if }n=3,
\end{array}\right.\end{equation}
where Scal$_g(x_0)$ is the scalar curvature at $x_0$. The mass is defined at the end of Section \ref{sec:proofth}.
 

\medskip\noindent The proof of Theorem \ref{Main-Th-Sharp-Cst-H-S-ineq} relies on the analysis of blowing-up families to critical nonlinear elliptic equations. In Section \ref{sec:blowup}, we prove a general convergence theorem for blowing-up solutions to Hardy-Sobolev equations. In Section \ref{sec:proofth},  we prove Theorem \ref{Main-Th-Sharp-Cst-H-S-ineq} by adapting the arguments of Druet (in Druet \cite{Druet-1}). We prove \eqref{min:B0} in Section \ref{sec:proofth}.


 
\section{Blow-up around $x_0$}\label{sec:blowup} We let $(M,g)$ be a smooth compact Riemannian Manifold of dimension $n \geq 3$, $x_0 \in M$, $s \in (0,2)$.
We consider a family $(u_\alpha)_{\alpha>0}$ in $H_1^2(M)$, 
such that for all $\alpha > 0$, $u_\alpha \geq 0$, $u_\alpha \not\equiv 0$ and $u_\alpha$ is a solution to the problem  
\begin{equation}\label{eqt-1-Th-2-Sharp-Cst-H-S-ineq}
\left\{
\begin{array}{ll}
\Delta_g u_\alpha + \alpha u_\alpha = \lambda_\alpha\frac{u_\alpha^{\crit-1}}{d_g(x,x_0)^s}\\
 \lambda_\alpha \in (0, K(n,s)^{-1}) \ , \ \|u_\alpha\|_{\crit,s}=1.
\end{array}\right.
\end{equation} 
Here $\Delta_g:=-\hbox{div}_g(\nabla)$ is the Laplace-Beltrami operator. It follows from the regularity and the maximum principle of \cite{jaber:test:fcts} that, for any $\alpha > 0$, $u_\alpha  \in C^{0,\beta}(M)\cap C_{loc}^{2,\gamma}(M\setminus\{x_0\})$, $\beta \in (0, \min(1,2-s))$, $\gamma \in (0,1)$, and $u_\alpha>0$. We define $I_\alpha$ as
\begin{equation}\label{eqt-1-Th-2-Sharp-Cst-H-S-ineq}
v \in H_1^2(M)\setminus\{0\} \mapsto I_\alpha(v) := \frac{\int_M |\nabla v|_g^2 dv_g + \alpha\int_Mv^2dv_g}{\|u_\alpha\|_{\crit,s}^2}.
\end{equation}
In particular, $I_\alpha(u_\alpha) = \int_M |\nabla u_\alpha|_g^2 dv_g + \alpha\int_Mu_\alpha^2dv_g = \lambda_\alpha$ for all $\alpha>0$.

\medskip\noindent We claim that
\begin{equation}\label{weak:lim}
u_\alpha \rightharpoonup 0\hbox{ weakly in }H_1^2(M)\hbox{ as }\alpha \to +\infty.
\end{equation}
We prove the claim. As one checked, $(u_\alpha)_{\alpha>0}$ is bounded in $H_1^2(M)$. Therefore, there exists $u^0 \in H_1^2(M)$ such that $u_\alpha \rightharpoonup u^0$ in  $H_1^2(M)$ as $\alpha \to +\infty$. By $I_\alpha(u_\alpha) = \lambda_\alpha$, 
we get that $\|u_\alpha\|_2 \leq C_1\alpha^{-1/2}$, where $C_1 > 0$ is independent of $\alpha$. Since $H_1^2(M)$ is compactly embedded in $L^2(M)$, we then get that $\|u^0\|_2 = 0$. Hence $u^0 \equiv 0$. This proves the claim.

\medskip\noindent Since $M$ is compact and $u_\alpha\in C^0(M)$ for all $\alpha > 0$, there exist $x_\alpha \in M$, $\mu_\alpha > 0$ such that 
\begin{equation}\label{eqt-2-Th-2-Sharp-Cst-H-S-ineq}
\max_M(u_\alpha) = u_\alpha(x_\alpha) = \mu_\alpha^{1-\frac{n}{2}}.
\end{equation}

\medskip\noindent In the sequel, we denote by $\B_\rho(z) \subset M$ the geodesic ball of radius $\rho$ centered at $z$.

\begin{proposition}\label{prop-1-Th-2-Sharp-Cst-H-S-ineq} We let $(u_\alpha)_{\alpha>0}$ be as in \eqref{eqt-1-Th-2-Sharp-Cst-H-S-ineq}. Then $u_\alpha \to 0$ as $\alpha \to +\infty$ in $C^0_{loc}(M\setminus\{x_0\})$.
\end{proposition}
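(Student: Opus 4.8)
The decisive structural fact is that the Hardy--Sobolev exponent is \emph{subcritical} with respect to the standard Sobolev embedding away from the singularity: since $s\in(0,2)$ we have $\crit=\frac{2(n-s)}{n-2}<\frac{2n}{n-2}$, so on any geodesic ball avoiding $x_0$ the nonlinearity is subcritical and no concentration of energy can occur there. The plan is to exploit exactly this to upgrade the weak convergence \eqref{weak:lim} to uniform convergence on compact subsets of $M\setminus\{x_0\}$, via a uniform local elliptic estimate.

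First I would fix $p\in M\setminus\{x_0\}$ and $\delta>0$ small enough that $x_0\notin\overline{\B_{2\delta}(p)}$. On $\B_{2\delta}(p)$ the weight $d_g(\cdot,x_0)^{-s}$ is bounded between two positive constants, so from $\|u_\alpha\|_{\crit,s}=1$ one immediately gets a uniform bound $\int_{\B_{2\delta}(p)}u_\alpha^{\crit}\,dv_g\le C$. Writing the equation in \eqref{eqt-1-Th-2-Sharp-Cst-H-S-ineq} as $\Delta_g u_\alpha = h_\alpha u_\alpha-\alpha u_\alpha$ with $h_\alpha:=\lambda_\alpha u_\alpha^{\crit-2}d_g(\cdot,x_0)^{-s}$, and using that $\alpha u_\alpha\ge0$, the nonnegative function $u_\alpha$ is a subsolution of $\Delta_g u_\alpha\le h_\alpha u_\alpha$ on $\B_{2\delta}(p)$.

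The crucial elementary computation is that the potential $h_\alpha$ lies in $L^r$ with $r=\frac{n-s}{2-s}$, that $(\crit-2)r=\crit$, and that $r>\frac n2$ precisely because $n\ge3$ and $s<2$. Consequently $\|h_\alpha\|_{L^r(\B_{2\delta}(p))}^r\le C\int_{\B_{2\delta}(p)}u_\alpha^{\crit}\,dv_g$ is bounded uniformly in $\alpha$, using also $\lambda_\alpha<K(n,s)^{-1}$. With a potential in $L^r$, $r>n/2$, the De Giorgi--Nash--Moser local boundedness estimate for subsolutions applies with a constant depending only on $\|h_\alpha\|_{L^r}$, the geometry and $\delta$, hence uniformly in $\alpha$:
\begin{equation*}
\sup_{\B_{\delta}(p)}u_\alpha\le C\Big(\int_{\B_{2\delta}(p)}u_\alpha^{\crit}\,dv_g\Big)^{\frac{1}{\crit}}.
\end{equation*}

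It remains to show the right-hand side tends to $0$. Restricting \eqref{weak:lim}, the family $(u_\alpha)$ is bounded in $H_1^2(\B_{2\delta}(p))$ and converges weakly to $0$; since $\crit<\frac{2n}{n-2}$, the embedding $H_1^2(\B_{2\delta}(p))\hookrightarrow L^{\crit}(\B_{2\delta}(p))$ is compact, so $u_\alpha\to0$ strongly in $L^{\crit}(\B_{2\delta}(p))$. Combining this with the sup estimate yields $\sup_{\B_\delta(p)}u_\alpha\to0$. Covering an arbitrary compact $K\subset M\setminus\{x_0\}$ by finitely many such balls and using $u_\alpha\ge0$ gives $\|u_\alpha\|_{C^0(K)}\to0$, which is the claimed $C^0_{loc}(M\setminus\{x_0\})$ convergence. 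The main obstacle is the uniform Moser estimate with an $\alpha$-dependent potential of only $L^{n/2+\varepsilon}$ regularity; everything hinges on the borderline computation $r=\frac{n-s}{2-s}>\frac n2$, which is exactly the quantitative form of subcriticality away from $x_0$.
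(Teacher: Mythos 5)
Your proof is correct and takes essentially the same route as the paper: both write $u_\alpha$ as a subsolution $\Delta_g u_\alpha \le h_\alpha u_\alpha$ on balls avoiding $x_0$, bound the potential in $L^r$ with $r>n/2$ (the paper allows any $r\in(\frac n2,\frac{n}{2-s})$, your choice $r=\frac{n-s}{2-s}$ lies in that interval), and conclude via the Han--Lin/Moser local boundedness estimate, the weak convergence $u_\alpha\rightharpoonup 0$, and a covering argument. The only cosmetic difference is that the paper feeds the $L^2$-norm into the Moser estimate and uses the compact embedding $H_1^2\hookrightarrow L^2$, while you use the $L^{\crit}$-norm and the compact embedding $H_1^2\hookrightarrow L^{\crit}$ (valid since $\crit<\frac{2n}{n-2}$); both are equivalent consequences of the same weak convergence.
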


\begin{proof} We consider $y \in M\setminus\{x_0\}$, $\rho_y = \frac{1}{3}d_g(y,x_0)$. By \eqref{eqt-1-Th-2-Sharp-Cst-H-S-ineq}, we have that $ \Delta_{g}u_\alpha \leq F_\alpha u_\alpha$ in $\B_{2\rho_y}(y)$, where $F_\alpha$ is the function defined by 
$ F_\alpha(x) = \lambda_\alpha u_\alpha^{\crit-2}/d_{g}(\cdot, x_0)^s$.  For any $r\in \left( \frac{n}{2}, \frac{n}{2-s} \right)$, we have that $\int_{\B_{2\rho_y}(y)}F_\alpha^rdv_{g} \leq C_2$  where $C_2 > 0$ is a constant independent of $\alpha$.
By Theorem 4.1 in Han-Lin \cite{Han-Lin}  (see also Lemma \ref{lemma-2-Appendix-Sharp-Cst-H-S-ineq} in the Appendix), 
it follows that there exists $C_3 = C_3(n,s,y,C_2) > 0$ independent of $\alpha$ such that, up to a subsequence, we have that 
$$\max_{\B_{\rho_y}(y)}u_\alpha \leq C_3\|u_\alpha\|_{L^2(\B_{2\rho_y})(y)}. $$ 
Since $u_\alpha \rightharpoonup 0$ in  $H_1^2(M)$ as $\alpha \to +\infty$ then by the last inequality, we get $$\lim_{\alpha\to +\infty}\|u_\alpha\|_{L^{\infty}(\B_{\rho_y}(y))} = 0.$$ 
Proposition \ref{prop-1-Th-2-Sharp-Cst-H-S-ineq} follows from a covering argument.\end{proof}



\begin{proposition}\label{prop-2-Th-2-Sharp-Cst-H-S-ineq} 
We let $(u_\alpha)_{\alpha>0}$ be as in \eqref{eqt-1-Th-2-Sharp-Cst-H-S-ineq}. Then $\sup_Mu_\alpha = +\infty$ as $\alpha \to +\infty$.
\end{proposition}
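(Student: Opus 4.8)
The plan is to argue by contradiction. Suppose the conclusion fails; then, up to extracting a subsequence, there is a constant $C>0$ such that $\sup_M u_\alpha = u_\alpha(x_\alpha) \leq C$ for all $\alpha$ in that subsequence. I would then derive a contradiction with the normalization $\|u_\alpha\|_{\crit,s}=1$, i.e. with
$$\int_M \frac{u_\alpha^{\crit}}{d_g(x,x_0)^s}\, dv_g = 1 \qquad \text{for all } \alpha,$$
by showing that, under the boundedness hypothesis, the left-hand side is forced to tend to $0$ as $\alpha\to+\infty$.

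The underlying idea is that all the $L^{\crit}(M,d_g(\cdot,x_0)^{-s})$-mass of $u_\alpha$ must concentrate at $x_0$, and a uniform pointwise bound is incompatible with this because the singular weight is integrable. Concretely, I would fix $\epsilon\in(0,1)$ and split the integral over $\B_\delta(x_0)$ and its complement $M\setminus\B_\delta(x_0)$, for a small radius $\delta>0$ to be chosen. On the ball, the contradiction hypothesis gives
$$\int_{\B_\delta(x_0)} \frac{u_\alpha^{\crit}}{d_g(x,x_0)^s}\, dv_g \leq C^{\crit}\int_{\B_\delta(x_0)} \frac{dv_g}{d_g(x,x_0)^s}.$$
Since $s<2\leq n$, the weight $d_g(\cdot,x_0)^{-s}$ is integrable near $x_0$, so the right-hand integral tends to $0$ as $\delta\to 0$; hence $\delta$ can be fixed, independently of $\alpha$, so that this term is $<\epsilon/2$ for every $\alpha$.

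For the exterior part I would invoke Proposition \ref{prop-1-Th-2-Sharp-Cst-H-S-ineq}: the set $M\setminus\B_\delta(x_0)$ is a compact subset of $M\setminus\{x_0\}$, so $u_\alpha\to 0$ uniformly there, while $\int_{M\setminus\B_\delta(x_0)} d_g(\cdot,x_0)^{-s}\, dv_g \leq \delta^{-s}\,\mathrm{Vol}_g(M)$ is finite and $\alpha$-independent. Consequently $\int_{M\setminus\B_\delta(x_0)} u_\alpha^{\crit} d_g(\cdot,x_0)^{-s}\, dv_g\to 0$ as $\alpha\to+\infty$, and in particular is $<\epsilon/2$ for $\alpha$ large. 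Adding the two estimates yields $1=\|u_\alpha\|_{\crit,s}^{\crit}<\epsilon<1$ for all large $\alpha$, the desired contradiction, and therefore $\sup_M u_\alpha\to+\infty$. The only point requiring care is that the estimate on the ball be uniform in $\alpha$; this is exactly what the uniform bound $u_\alpha\leq C$ coming from the contradiction hypothesis provides, together with the $\alpha$-independent integrability of the singular weight. Once this is arranged the argument is routine, so I do not anticipate a serious obstacle.
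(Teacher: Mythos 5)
Your proof is correct, and it reaches the same contradiction as the paper --- namely that the boundedness assumption forces $\|u_\alpha\|_{\crit,s}\to 0$, which is incompatible with the normalization $\|u_\alpha\|_{\crit,s}=1$ --- but by a different route. The paper's own proof is shorter and uses lighter input: from the uniform bound $u_\alpha\leq C_4$ and the weak convergence \eqref{weak:lim} (which, via the compact embedding $H_1^2(M)\hookrightarrow L^2(M)$, yields almost-everywhere convergence to $0$ along a subsequence), it applies the dominated convergence theorem on all of $M$ at once, with dominating function $C_4^{\crit}\, d_g(\cdot,x_0)^{-s}\in L^1(M)$, to conclude $\|u_\alpha\|_{\crit,s}\to 0$. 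You instead perform an $\epsilon/2$-splitting of the integral into $\B_\delta(x_0)$ and its complement, controlling the near part by the uniform bound together with the absolute continuity of $\int_{\B_\delta(x_0)}d_g(\cdot,x_0)^{-s}\,dv_g$, and the far part by the locally uniform convergence of Proposition \ref{prop-1-Th-2-Sharp-Cst-H-S-ineq}. What your version buys is that it sidesteps the measure-theoretic step (no extraction of an a.e.-convergent subsequence is needed, since the convergence away from $x_0$ is uniform); what it costs is that it invokes Proposition \ref{prop-1-Th-2-Sharp-Cst-H-S-ineq}, which rests on the Han--Lin elliptic estimate, whereas the paper's argument needs only the elementary weak-convergence claim \eqref{weak:lim}. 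Since both ingredients are established before this proposition, both proofs are legitimate at this point in the paper; your splitting is, in effect, a hands-on reproof of the dominated convergence step adapted to the single singular point $x_0$.
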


\begin{proof} We proceed by contradiction and assume that $\sup_Mu_\alpha  \not\rightarrow +\infty$ as $\alpha \to +\infty$. 
Then there exists $C_4 > 0$ independent of $\alpha$ such that $u_\alpha \leq C_4$. Since $u_\alpha \rightharpoonup 0$
as $\alpha \to +\infty$  in $H_1^2(M)$ then by dominated convergence Theorem we get that $\lim_{\alpha\to +\infty}\|u_\alpha\|_{\crit, s} = 0$. A contradiction since for all $\alpha > 0$, 
$\|u_\alpha\|_{\crit, s} = 1$. This ends the proof of Proposition \ref{prop-2-Th-2-Sharp-Cst-H-S-ineq}.
\end{proof} 
\noindent Propositions \ref{prop-1-Th-2-Sharp-Cst-H-S-ineq} and  \ref{prop-2-Th-2-Sharp-Cst-H-S-ineq} immediately yield the following:
\begin{proposition}\label{prop-3-Th-2-Sharp-Cst-H-S-ineq} 
We let $(u_\alpha)_{\alpha>0}$ be as in \eqref{eqt-1-Th-2-Sharp-Cst-H-S-ineq}. Then $x_\alpha \to x_0$ as $\alpha \to +\infty$.
\end{proposition}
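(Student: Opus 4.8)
The plan is to deduce the statement by a short compactness-and-contradiction argument, feeding the two preceding propositions against each other. First I would record the two facts I intend to combine. From Proposition~\ref{prop-2-Th-2-Sharp-Cst-H-S-ineq} and the normalization \eqref{eqt-2-Th-2-Sharp-Cst-H-S-ineq}, the maximal value blows up: since $\max_M u_\alpha = u_\alpha(x_\alpha) = \mu_\alpha^{1-\frac{n}{2}}$, the conclusion $\sup_M u_\alpha \to +\infty$ forces $u_\alpha(x_\alpha) \to +\infty$ (equivalently $\mu_\alpha \to 0$). From Proposition~\ref{prop-1-Th-2-Sharp-Cst-H-S-ineq}, the family decays uniformly away from $x_0$, in the sense that $\sup_K u_\alpha \to 0$ for every compact set $K \subset M\setminus\{x_0\}$.

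Then I would suppose, for contradiction, that $x_\alpha \not\to x_0$ as $\alpha \to +\infty$. Since $M$ is compact, after passing to a subsequence there exists $\delta > 0$ such that $d_g(x_\alpha, x_0) \geq \delta$ for all $\alpha$ in the subsequence. Set $K := \{x \in M : d_g(x,x_0) \geq \delta\}$, which is a closed, hence compact, subset of $M$ contained in $M\setminus\{x_0\}$, and which contains $x_\alpha$ for every $\alpha$ in the subsequence. Applying Proposition~\ref{prop-1-Th-2-Sharp-Cst-H-S-ineq} to this $K$ gives $\sup_K u_\alpha \to 0$, so in particular $u_\alpha(x_\alpha) \leq \sup_K u_\alpha \to 0$. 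This directly contradicts $u_\alpha(x_\alpha) \to +\infty$ obtained above, and the contradiction shows $x_\alpha \to x_0$.

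I do not expect a genuine obstacle here: once Propositions~\ref{prop-1-Th-2-Sharp-Cst-H-S-ineq} and \ref{prop-2-Th-2-Sharp-Cst-H-S-ineq} are in hand, the argument is purely topological. The only point requiring a little care is making sure the concentration point $x_\alpha$ is trapped in a fixed compact set that genuinely avoids $x_0$, so that the local $C^0$-decay of Proposition~\ref{prop-1-Th-2-Sharp-Cst-H-S-ineq} applies on the whole subsequence; the compactness of $M$ is exactly what allows the extraction of the uniform lower bound $d_g(x_\alpha,x_0)\geq\delta$. This is why the authors describe the statement as an immediate consequence of the two propositions.
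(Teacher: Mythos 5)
Your argument is correct and is precisely the one the paper leaves implicit when it says Propositions \ref{prop-1-Th-2-Sharp-Cst-H-S-ineq} and \ref{prop-2-Th-2-Sharp-Cst-H-S-ineq} ``immediately yield'' the result: the blow-up of $u_\alpha(x_\alpha)=\mu_\alpha^{1-\frac{n}{2}}$ is incompatible with the uniform $C^0$-decay on any compact set avoiding $x_0$, so the maximum points must accumulate at $x_0$. No gap here; your spelled-out contradiction via the compact set $K=\{x\in M : d_g(x,x_0)\geq\delta\}$ is exactly the intended reasoning.
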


\medskip\noindent In the sequel, we fix $R_0 \in (0, i_g(M))$, where $i_g(M)>0$ is the injectivity radius of $(M,g)$. We fix $\eta_0\in C^\infty_c(\B_{3R_0/4}(0)\subset\R^n)$ such that $\eta\equiv 1$ in $\B_{R_0/2}(0)$. The main result of this section is the following:
\begin{theorem}\label{Th2-Sharp-Cst-H-S-ineq} We let $(u_\alpha)_{\alpha>0}$ be as in \eqref{eqt-1-Th-2-Sharp-Cst-H-S-ineq}. We consider a sequence $(z_\alpha)_{\alpha>0}\in M$ such that $\lim_{\alpha\rightarrow+\infty}z_\alpha = x_0$.
We define the function $\hat{u}_\alpha$ on $\B_{R_0\mu_\alpha^{-1}}(0)\subset \mathbb{R}^n$ by
\begin{equation}
\hat{u}_\alpha(X) = \mu_\alpha^{\frac{n}{2}-1}u_\alpha(\exp_{z_\alpha}(\mu_\alpha X)),
\end{equation}
where $\exp_{z_\alpha}^{-1} : \Omega_\alpha \to \B_{R_0}(0)$ is the exponential map at $z_\alpha$.
We assume that 
\begin{equation}\label{hyp:dist}
d_g(x_\alpha, z_\alpha) = O(\mu_\alpha)\hbox{ when }\alpha\to +\infty.
\end{equation} 
Then 
\begin{equation}\label{concl:dist}
d_g(z_\alpha, x_0)=O(\mu_\alpha)\hbox{ when }\alpha\to +\infty
\end{equation}
and, up to a subsequence, $\eta_\alpha\hat{u}_\alpha$ converge to $\hat{u}$ weakly in $D_1^2(\R^n)$ and uniformly in $C_{loc}^{0,\beta}(\R^n)$, for all $\beta \in (0, \min(1,2-s))$, where 
$\eta_\alpha:=\eta_0(\mu_\alpha\cdot)$ and 
$$\hat{u}(X) = \left(\frac{a^{\frac{2-s}{2}}k^{\frac{2-s}{2}}}{a^{2-s} + |X-X_0|^{2-s}}\right)^{\frac{n-2}{2-s}}\hbox{ for all }X \in \R^n$$
with $X_0 \in \R^n$,  $a > 0$ and $k^{2-s} = (n-2)(n-s)K(n,s)$. In particular $\hat{u}$ verifies 
\begin{equation}\label{eqt-0-Th-2-Sharp-Cst-H-S-ineq}
 \Delta_\delta\hat{u} = K(n,s)^{-1}\frac{\hat{u}^{\crit-1}}{|X-X_0|^s} \hbox{ in }\R^n\ {\rm and} \ \int_{\R^n}\frac{\hat{u}^{\crit}}{|X-X_0|^s}dX = 1,
\end{equation}
where $|\cdot|$ is the Euclidean norm on $\R^n$ and $\delta$ is the Euclidean metric of $\R^n$. 
\end{theorem}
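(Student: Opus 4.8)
The plan is to perform a standard blow-up (rescaling) analysis around the concentration scale $\mu_\alpha$, rescale the equation, extract a limit profile via elliptic estimates, and identify that profile using the Euclidean Hardy--Sobolev inequality together with a sharp quantization of mass. First I would set up normal coordinates at $z_\alpha$ and write $\hat{u}_\alpha(X) = \mu_\alpha^{\frac{n}{2}-1} u_\alpha(\exp_{z_\alpha}(\mu_\alpha X))$, so that $\hat{u}_\alpha$ solves a rescaled equation $\Delta_{g_\alpha}\hat{u}_\alpha + \alpha\mu_\alpha^2 \hat{u}_\alpha = \lambda_\alpha \frac{\hat{u}_\alpha^{\crit-1}}{d_{g_\alpha}(X, X_{0,\alpha})^s}$ on $\B_{R_0\mu_\alpha^{-1}}(0)$, where $g_\alpha := (\exp_{z_\alpha}^\star g)(\mu_\alpha\cdot)$ is the pulled-back rescaled metric and $X_{0,\alpha} := \mu_\alpha^{-1}\exp_{z_\alpha}^{-1}(x_0)$. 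By Cartan's expansion $g_\alpha \to \delta$ in $C^2_{loc}(\R^n)$. The key preliminary step is to control $\alpha\mu_\alpha^2$: since $\hat{u}_\alpha(0) = 1$ (at $z_\alpha = x_\alpha$, up to the hypothesis \eqref{hyp:dist}) is the max, testing the equation or using the $L^2$-bound $\|u_\alpha\|_2 \le C\alpha^{-1/2}$ from the excerpt together with the scaling $\int_{\R^n}\hat{u}_\alpha^2 dv_{g_\alpha} = \mu_\alpha^{-2}\int_M u_\alpha^2 dv_g$ forces $\alpha\mu_\alpha^2 \to 0$, so the linear zeroth-order term vanishes in the limit. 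Likewise $\lambda_\alpha \to K(n,s)^{-1}$ must be extracted (it converges up to a subsequence to some $\lambda_\infty \in (0, K(n,s)^{-1}]$).

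Next I would establish the local convergence. The hypothesis \eqref{hyp:dist}, $d_g(x_\alpha,z_\alpha) = O(\mu_\alpha)$, guarantees that in the rescaled chart the max point $\exp_{z_\alpha}^{-1}(x_\alpha)/\mu_\alpha$ stays bounded, and I would first argue that \eqref{concl:dist} holds, namely $d_g(z_\alpha,x_0) = O(\mu_\alpha)$: if not, then $|X_{0,\alpha}| \to \infty$, the singular weight $d_{g_\alpha}(\cdot,X_{0,\alpha})^{-s}$ would converge locally to $0$, the limit profile would solve the homogeneous equation $\Delta_\delta\hat{u} = 0$, and being bounded (hence constant, $=1$ by the max normalization) this contradicts $\hat u \in D_1^2(\R^n)$ unless $\hat u \equiv 0$, clashing with $\hat u(0)=1$. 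Once \eqref{concl:dist} is in hand, $X_{0,\alpha}$ is bounded and converges (up to subsequence) to some $X_0$. The uniform $C^{0,\beta}_{loc}$ and weak $D_1^2$ convergence then follows from the De Giorgi--Nash--Moser / Han--Lin estimates cited in the excerpt applied on fixed balls: the right-hand side $\hat{u}_\alpha^{\crit-2}/d_{g_\alpha}^s$ is bounded in $L^r$ for $r\in(\frac n2,\frac n{2-s})$ since $\hat u_\alpha \le 1$ and $d_{g_\alpha}^{-s}\in L^r_{loc}$, giving local $L^\infty$ and then $C^{0,\beta}$ bounds on any compact set away from possibly the single point $X_0$; a Harnack/interior estimate handles the point $X_0$ as well because $s<2$ keeps the weight locally integrable to a high enough power. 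Passing to the limit in the weak formulation yields that the limit $\hat{u} \ge 0$ solves $\Delta_\delta \hat{u} = \lambda_\infty \frac{\hat u^{\crit-1}}{|X-X_0|^s}$ weakly on $\R^n$, with $\hat{u}(0)=1$ so $\hat u \not\equiv 0$, and $\hat u \in D_1^2(\R^n)$.

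The final and most delicate step is the identification of $\hat{u}$ as the exact extremal, including the mass normalization $\int_{\R^n}\hat{u}^{\crit}|X-X_0|^{-s}\,dX = 1$ and $\lambda_\infty = K(n,s)^{-1}$. Here I would first prove the global integral bounds by lower semicontinuity: $\int_{\R^n}\hat u^{\crit}|X-X_0|^{-s}dX \le \liminf \int \hat u_\alpha^{\crit} dv_{g_\alpha,s} \le 1$ (the total mass is $1$ on $M$), and $\int_{\R^n}|\nabla\hat u|^2 dX \le \liminf \lambda_\alpha = \lambda_\infty$. Testing the Euclidean Hardy--Sobolev inequality \eqref{eqt-0.2-Main-Th-Sharp-Cst-H-S-ineq} against $\hat u$ and combining with the limit equation (integrate it against $\hat u$ to get $\int|\nabla\hat u|^2 = \lambda_\infty \int \hat u^{\crit}|X-X_0|^{-s}$) pins down, via the constraint $\lambda_\infty\le K(n,s)^{-1}$, that all inequalities are equalities: $\hat u$ is an extremal for $K(n,s)$, the normalized mass equals $1$, and $\lambda_\infty = K(n,s)^{-1}$. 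I expect this equality-case/no-loss-of-mass argument to be the main obstacle, since one must rule out the escape of Hardy--Sobolev energy to infinity and show that the weak limit actually attains the full unit mass rather than a fraction of it; the mechanism is that any mass fraction $\theta<1$ would, by the sharp Euclidean inequality applied to a nontrivial extremal, force the Dirichlet energy to exceed the bound $\lambda_\infty\le K(n,s)^{-1}$, a contradiction. Having identified $\hat u$ as the unique (radial about $X_0$) extremal from Lieb's classification, its explicit form follows, giving the stated expression with $k^{2-s}=(n-2)(n-s)K(n,s)$ and some $a>0$, $X_0\in\R^n$, which completes the proof.
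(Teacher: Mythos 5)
Your overall architecture (rescale, pass to a limit equation, rule out $|X_{0,\alpha}|\to\infty$, establish the two lower-semicontinuity bounds, close with the equality case of the sharp Euclidean inequality and the classification of extremals) matches the paper's, and your equality-case ending is a legitimate variant of the paper's Step 1.6 that in addition recovers $\lambda_\alpha\to K(n,s)^{-1}$ without the almost-sharp inequality of \cite{jaber:test:fcts} used in the paper's Step 1.2. The genuine gap is your preliminary claim that the bound $\Vert u_\alpha\Vert_2\le C\alpha^{-1/2}$ together with the scaling identity ``forces $\alpha\mu_\alpha^2\to 0$.'' It does not. What those facts give is
\[
\alpha\mu_\alpha^2\int_{\B_R(0)}\hat u_\alpha^2\, dv_{\hat g_\alpha}=\alpha\int_{\B_{R\mu_\alpha}(z_\alpha)}u_\alpha^2\,dv_g\le \lambda_\alpha< K(n,s)^{-1},
\]
and since $\int_{\B_R(0)}\hat u_\alpha^2\,dv_{\hat g_\alpha}\to\int_{\B_R(0)}\hat u^2\,dX>0$, this yields only $\alpha\mu_\alpha^2=O(1)$, i.e.\ $\alpha\mu_\alpha^2\to A$ for some $A\ge 0$ along a subsequence --- which is exactly the paper's Step 1.3. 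Proving that $A=0$ is a separate, substantive step in the paper (Step 1.5): assuming $A>0$, one first shows $\hat u\in L^2(\R^n)$ (Beppo-Levi after testing with $\hat\eta_R\hat u$), and then a Pohozaev-type identity (Claim 5.3 of \cite{Rob-Pu-F.Rob}) forces $A\int_{\R^n}\hat u^2\,dX=0$, a contradiction. Your proposal contains no substitute for this step.

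The omission propagates to two later points. First, your proof of \eqref{concl:dist} invokes Liouville's theorem for bounded harmonic functions, but without knowing $A=0$ the limit equation in the case $|X_{0,\alpha}|\to+\infty$ is $\Delta_\delta\hat u+A\hat u=0$, to which that theorem does not apply; the paper's Step 1.4 instead uses an integral argument (test against $\hat\eta_R\hat u$ and apply its Lemma 2), valid for every $A\ge 0$. Second, your key identity $\int_{\R^n}|\nabla\hat u|^2dX=\lambda_\infty\int_{\R^n}\hat u^{\crit}|X-X_0|^{-s}dX$ silently drops the nonnegative term $A\Vert\hat u\Vert_2^2$. The good news is that your equality-case scheme can be repaired to absorb it: testing the limit equation with $\hat\eta_R\hat u$ and letting $R\to\infty$ gives $\int_{\R^n}|\nabla\hat u|^2dX+A\Vert\hat u\Vert_{2}^2=\lambda_\infty m$ with $m:=\int_{\R^n}\hat u^{\crit}|X-X_0|^{-s}dX\le 1$, and then the chain
\[
m^{\frac{2}{\crit}}\le K(n,s)\int_{\R^n}|\nabla\hat u|^2dX\le K(n,s)\lambda_\infty m\le m\le m^{\frac{2}{\crit}}
\]
collapses to equalities, giving simultaneously $m=1$, $\lambda_\infty=K(n,s)^{-1}$, $A\Vert\hat u\Vert_2^2=0$ (hence $A=0$, since $\hat u\not\equiv 0$), and extremality of $\hat u$ --- arguably a more elementary route than the paper's Pohozaev argument. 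But this has to be carried out explicitly; as written, your argument assumes $A=0$ at a point where it has not been established. (A minor separate slip: the correct local integrability range for the weight in the Han--Lin step is $r\in(n/2,n/s)$, not $r\in(n/2,n/(2-s))$; for $s>1$ your range contains exponents with $|X-X_0|^{-sr}\notin L^1_{loc}$.)
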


\subsection*{Proof of Theorem \ref{Th2-Sharp-Cst-H-S-ineq}} We consider $(u_\alpha)_\alpha$, $(z_\alpha)_{\alpha>0}\in M$ and $\hat{u}_\alpha$ as in the statement of the theorem. We define the metric $\hat{g}_\alpha : X \mapsto \exp_{z_\alpha}^*g(\mu_\alpha X)$ and also on $\R^n$, the vectors 
$X_\alpha = \mu_\alpha^{-1}\exp_{z_\alpha}^{-1}(x_\alpha)$ and $X_{0,\alpha} = \mu_\alpha^{-1}\exp_{z_\alpha}^{-1}(x_0)$. It follows from \eqref{hyp:dist} that
\begin{equation}\label{eqt-2.1-Th-2-Sharp-Cst-H-S-ineq}
\frac{d_g(z_\alpha, x_\alpha)}{\mu_\alpha} = |X_\alpha| = O(1)\hbox{ when }\alpha\to +\infty.
\end{equation} 
The proof of Theorem \ref{Th2-Sharp-Cst-H-S-ineq} proceeds in several steps :

\medskip\noindent{\bf Step 1.0 :} We claim that, for all $\alpha > 0$, $\hat{u}_\alpha$ verifies 
\begin{equation}\label{eqt-3.0-Th-2-Sharp-Cst-H-S-ineq}
\Delta_{\hat{g}_\alpha}\hat{u}_\alpha + \alpha\mu_\alpha^2\hat{u}_\alpha =
\lambda_\alpha\frac{\hat{u}_\alpha^{2^*(s)-1}}{d_{\hat{g}_\alpha}(X,X_{0,\alpha})^s}.
\end{equation}

\begin{proof} Indeed, we consider $\alpha > 0$, $X \in \B_{R_\alpha\mu_\alpha^{-1}}(0)$. Letting 
$x = \exp_{z_\alpha}(\mu_\alpha X)$, we then obtain
\begin{equation}\label{eqt-3.1-Th-2-Sharp-Cst-H-S-ineq}
\Delta_{\hat{g}_\alpha}\hat{u}_\alpha(X) =  \mu_\alpha^{{\frac{n}{2}+1}}\Delta_gu_\alpha(x).
\end{equation}
and 
\begin{equation}\label{eqt-3.2-Th-2-Sharp-Cst-H-S-ineq}
\frac{\hat{u}_\alpha^{2^*(s)-1}}{d_{\hat{g}_\alpha}(X,X_{0,\alpha})^s} =
\mu_\alpha^{({\frac{n}{2}-1})(2^*(s)-1)+s}\frac{u_\alpha^{2^*(s)-1}(x)}{d_g(x,x_0)^s}.
\end{equation}
Since $u_\alpha$ verifies equation \eqref{eqt-1-Th-2-Sharp-Cst-H-S-ineq} then 
plugging \eqref{eqt-3.1-Th-2-Sharp-Cst-H-S-ineq} and \eqref{eqt-3.2-Th-2-Sharp-Cst-H-S-ineq} into \eqref{eqt-1-Th-2-Sharp-Cst-H-S-ineq}, we get the claim. \end{proof}

\medskip\noindent{\bf Step 1.1:} We claim that there exists $\hat{u} \in D_1^2(\R^n)$, $\hat{u} \not\equiv 0$ such that, up to a subsequence, 
$(\eta_\alpha\hat{u}_\alpha)_{\alpha > 0}$ converge weakly to $\hat{u}$, as $\alpha \to +\infty$, in $D_1^2(\R^n)$. 

 \begin{proof} Indeed, for all $\alpha > 0$, we can write : 
\begin{equation}\label{eqt-4-Th-2-Sharp-Cst-H-S-ineq}
\int_{\R^n}|\nabla(\eta_\alpha\hat{u}_\alpha)|_{\hat{g}_\alpha}^2dv_{\hat{g}_\alpha} =
    \int_{\R^n}\eta_\alpha(\Delta_{\hat{g}_\alpha}\eta_\alpha) \hat{u}_\alpha^2dv_{\hat{g}_\alpha} +
    \int_{\R^n}\eta_\alpha^2|\nabla\hat{u}_\alpha|_{\hat{g}_\alpha}^2dv_{\hat{g}_\alpha}
\end{equation}
Since $\mu_\alpha \to 0$ as $\alpha \to +\infty$ then, up to a subsequence of $(\mu_\alpha)_{\alpha>0}$,  there exists $C_5 > 0$ independent of $\alpha$ such that we have in the sense of bilinear form 
\begin{equation}\label{eqt-5-Th-2-Sharp-Cst-H-S-ineq}
C_5^{-1}\delta(X) \leq \hat{g}_\alpha(X) \leq C_5\delta(X)
\end{equation}
for all $X \in \B_{\frac{3R_0}{4\mu_\alpha}}(0)$   where $\delta$ is the Euclidean metric on $\R^n$. Relations \eqref{eqt-4-Th-2-Sharp-Cst-H-S-ineq} and \eqref{eqt-5-Th-2-Sharp-Cst-H-S-ineq} imply that
there exists a constant $C_6 > 0$ independent of $\alpha$ such that 
\begin{equation}\label{eqt-6-Th-2-Sharp-Cst-H-S-ineq}
\int_{\R^n}\left|\eta_\alpha(\Delta_{\hat{g}_\alpha}\eta_\alpha)\right|\hat{u}_\alpha^2dv_{\hat{g}_\alpha} \leq C_6\mu_\alpha^2\int_{\B_{\frac{3R_0}{4\mu_\alpha}}(0)}\hat{u}_\alpha^2dv_{\hat{g}_\alpha},
\end{equation}
By passing to $\B_{\frac{3R_0}{4}}(z_\alpha)$ in \eqref{eqt-6-Th-2-Sharp-Cst-H-S-ineq} via the exponential chart $(\Omega_\alpha, \exp_{z_\alpha}^{-1})$ (by taking $x = \exp_{z_\alpha}(\mu_\alpha X)$), we obtain  that
\begin{equation}\label{eqt-7-Th-2-Sharp-Cst-H-S-ineq}
\int_{\R^n}\left|\eta_\alpha(\Delta_{\hat{g}_\alpha}\eta_\alpha)\right|\hat{u}_\alpha^2dv_{\hat{g}_\alpha} \leq C_7\int_Mu_\alpha^2dv_g,
\end{equation}
where $C_7 > 0$ is a constant independent of $\alpha$. Since $\Vert u_\alpha\Vert_{H_1^2(M)}= O(1)$ when $\alpha\to +\infty$, then relation \eqref{eqt-7-Th-2-Sharp-Cst-H-S-ineq} yields
\begin{equation}\label{eqt-8-Th-2-Sharp-Cst-H-S-ineq}
\int_{\R^n}|\nabla\eta_\alpha|_{\hat{g}_\alpha}^2\hat{u}_\alpha^2dv_{\hat{g}_\alpha} \leq C_8,
\end{equation}where $C_8 > 0$ is a constant independent of $\alpha$.
On the other hand, we write that 
\begin{eqnarray}\label{eqt-9-Th-2-Sharp-Cst-H-S-ineq}
 \int_{\R^n}\eta_\alpha^2|\nabla\hat{u}_\alpha|_{\hat{g}_\alpha}^2dv_{\hat{g}_\alpha} &\leq&  \int_{\B_{\frac{3R_0}{4}}(z_\alpha)}|\nabla u_\alpha|_g^2dv_g \nonumber
\\ &\leq& I_\alpha(u_\alpha) - \int_M\alpha u_\alpha^2dv_g<K(n,s)^{-1}
\end{eqnarray}
for all $\alpha > 0$. Plugging \eqref{eqt-8-Th-2-Sharp-Cst-H-S-ineq} and \eqref{eqt-9-Th-2-Sharp-Cst-H-S-ineq} into \eqref{eqt-4-Th-2-Sharp-Cst-H-S-ineq}, we then obtain that  
$$\int_{\R^n}|\nabla(\eta_\alpha\hat{u}_\alpha)|_{\hat{g}_\alpha}^2dv_{\hat{g}_\alpha} \leq C_9 $$
where $C_9 > 0$ is a constant independent of $\alpha$. The last relation and \eqref{eqt-5-Th-2-Sharp-Cst-H-S-ineq} give 
\begin{eqnarray}\label{eqt-10-Th-2-Sharp-Cst-H-S-ineq}
\int_{\R^n}|\nabla(\eta_\alpha\hat{u}_\alpha)|_\delta^2dX &\leq& C_5^{n/2+1}\int_{\R^n}|\nabla(\eta_\alpha\hat{u}_\alpha)|_{\hat{g}_\alpha}^2dv_{\hat{g}_\alpha} \leq C_{10}
\end{eqnarray}
where $C_{10} > 0$ is a constant independent of $\alpha$. This implies that the sequence $(\eta_\alpha\hat{u}_\alpha)_{\alpha > 0}$ 
is bounded in $D_1^2(\R^n)$ then there exists $\hat{u} \in D_1^2(\R^n)$ such that, up to a subsequence, $\eta_\alpha\hat{u}_\alpha \rightharpoonup \hat{u}$ as $\alpha \to +\infty$. 

\medskip\noindent It remains to prove that $\hat{u} \not\equiv 0$. Indeed, since $\hat{u}_\alpha \leq 1$ and $\lambda_\alpha \in (0, K(n,s)^{-1})$ then for all $X \in \B_{R_\alpha\mu_\alpha^{-1}}(0)$, we can write :
\begin{eqnarray}\label{eqt-11-Th-2-Sharp-Cst-H-S-ineq}
    \Delta_{\hat{g}_\alpha}\hat{u}_\alpha(X) &=&
    \lambda_\alpha\frac{\hat{u}_\alpha^{2^*(s)-1}}{d_{\hat{g}_\alpha}(X,X_{0,\alpha})^s} - \alpha\mu_\alpha^2\hat{u}_\alpha \nonumber
    \\ &\leq& \frac{K(n,s)^{-1}}{d_{\hat{g}_\alpha}(X,X_{0,\alpha})^s}\hat{u}_\alpha = F_\alpha(X) \hat{u}_\alpha,
\end{eqnarray}
where
$$ F_\alpha(X):=\frac{K(n,s)^{-1}}{d_{\hat{g}_\alpha}(X,X_{0,\alpha})^s}.$$
We consider $r \in (\frac{n}{2}, \frac{n}{s})$. It follows from \eqref{eqt-5-Th-2-Sharp-Cst-H-S-ineq} that
\begin{equation}\label{eqt-12-Th-2-Sharp-Cst-H-S-ineq}C_5^{-1/2}|X - X_{0,\alpha}| \leq d_{\hat{g}_\alpha}(X,X_{0,\alpha}) \leq C_5^{1/2}|X - X_{0,\alpha}|.
\end{equation}
and
\begin{equation}\label{eqt-13-Th-2-Sharp-Cst-H-S-ineq}
 C_5^{-n/2} \leq \sqrt{det(\hat{g}_\alpha)}(X) \leq C_5^{n/2}
\end{equation}
for all $X \in \B_{R_0}(0)$. We distinguish two cases : 

\medskip\noindent{\it Case 1.1.1 :} $X_{0,\alpha} \to X_0$ as $\alpha \to +\infty$. In this case, we get with \eqref{eqt-2.1-Th-2-Sharp-Cst-H-S-ineq} that for all $\alpha > 0$, $X_\alpha, X_{0,\alpha} \in \B_{R_1}(0)$, 
for $R_1 > 0$ sufficiently large
and by \eqref{eqt-12-Th-2-Sharp-Cst-H-S-ineq} and \eqref{eqt-13-Th-2-Sharp-Cst-H-S-ineq}, we obtain that $\int_{\B_{2R_1}(0)}F_\alpha^rdv_{\hat{g}_\alpha} \leq C_{11}, $
where  $C_{11} > 0$ is a constant independent of $\alpha$.

\medskip\noindent{\it Case 1.1.2 :} $X_{0,\alpha} \to +\infty$ as $\alpha \to +\infty$. In this case, coming back to relations \eqref{eqt-12-Th-2-Sharp-Cst-H-S-ineq}, \eqref{eqt-13-Th-2-Sharp-Cst-H-S-ineq} and
by dominated convergence Theorem,  we get that $\lim_{\alpha\rightarrow+\infty}\|F_\alpha\|_{C^0(\B_{2R_1}(0))} = 0$, where for all $\alpha > 0$, $X_\alpha \in \B_{R_1}(0)$.
It follows that $\int_{\B_{R_1}(0)}F_\alpha^rdv_{\hat{g}_\alpha} \leq C_{12}$ where $C_{12} > 0$  is independent of $\alpha$.

\medskip\noindent Hence, we get in both cases, up to a subsequence,  that there exists $R_1 > 0$ such that  for all $\alpha > 0$, $X_\alpha \in \B_{R_1}(0)$ and $\int_{\B_{2R_1}(0)}F_\alpha^rdv_{\hat{g}_\alpha} \leq C_{13}, $
where $C_{13} = \max(C_{11}, C_{12})$. Moreover,  for all $\theta \leq \min(1, 2-s)$,  $\hat{u}_\alpha \in C^{0,\theta}(\B_{2R_1}(0))$ and
$\hat{u}_\alpha> 0$. Thanks to Theorem 4.1 in Han-Lin \cite{Han-Lin}  (see also Lemma \ref{lemma-1-Appendix-Sharp-Cst-H-S-ineq} in the Appendix), we get that 
\begin{equation}\label{eqt-14.0-Th-2-Sharp-Cst-H-S-ineq}
1= \max_{\B_{R_1}(0)}\hat{u}_\alpha \leq C_{14}\|\hat{u}_\alpha\|_{L^2(\B_{2R_1}(0))}, 
\end{equation}
where  $C_{14} > 0$ is a constant independent of $\alpha$. 
Since $(\eta_\alpha\hat{u}_\alpha)_{\alpha > 0}$ is bounded and converges weakly to $\hat{u}$ as  $\alpha \to +\infty$ in $D_1^2(\R^n)$, the convergence is strong in $L^2_{loc}$ and then, letting $\alpha \to +\infty$ in \eqref{eqt-14.0-Th-2-Sharp-Cst-H-S-ineq},
we get that $\|\hat{u}\|_{L^2(\B_{2R_1}(0))} \geq C_{14}^{-1}$ and then $\hat{u} \not\equiv 0$. This ends the proof of Step 1.1.\end{proof}

\medskip\noindent{\bf Step 1.2:} We claim that $\lambda_\alpha \to K(n,s)^{-1}$ as $\alpha \to +\infty$. 

\begin{proof} Indeed, since for all $\alpha > 0$, we have $\lambda_\alpha \in (0, K(n,s)^{-1})$ 
then, up to a subsequence, $\lambda_\alpha \to \lambda\leq K(n,s)^{-1}$ as $\alpha \to +\infty$. We proceed by contradiction and assume that $\lambda \neq K(n,s)^{-1}$. Then there exist $\epsilon_0 > 0$ 
and $\alpha_0 > 0$ such that for all $\alpha > \alpha_0$ :    
\begin{equation}\label{eqt-14-Th-2-Sharp-Cst-H-S-ineq}
K(n,s)^{-1} > \lambda + \epsilon_0.
\end{equation}
By Jaber \cite{jaber:test:fcts},  for all $\epsilon > 0$ there exist $B_\epsilon > 0$ such that for all $\alpha > 0$, we have : 
\begin{equation}\label{eqt-15-Th-2-Sharp-Cst-H-S-ineq}
\left(\int_M\frac{|u_\alpha|^{2^\star(s)}}{d_g(x,x_0)^s}dv_g\right)^{\frac{2}{2^\star(s)}}
\leq (K(n,s) + \epsilon)\int_M |\nabla u_\alpha|_g^2 dv_g + B_\epsilon \int_Mu_\alpha^2dv_g.
\end{equation}
Since $\|u_\alpha\|_{\crit, s} = 1, I_\alpha(u_\alpha) = \lambda_\alpha$ and $u_\alpha \to 0$ in $L^2(M)$ as $\alpha \to +\infty$ then $$1 \leq  \left(\frac{1}{\lambda_\alpha + \epsilon_0} + \epsilon\right)\lambda_\alpha+o(1). $$
Letting $\alpha \to +\infty$ and then  $\epsilon \to 0$ in the last relation, we obtain that $\frac{\lambda}{\lambda + \epsilon_0} \geq 1$, a contradiction since $\lambda \geq 0$ and $\epsilon_0 > 0$.  
This proves that $\lambda = K(n,s)^{-1}$.\end{proof}

\medskip\noindent{\bf Step 1.3:} We claim that there exists $A \geq 0$ such that $\hat{u}$ verifies on $C_c^\infty(\R^n)$ : 
\begin{equation}\label{eqt-16-Th-2-Sharp-Cst-H-S-ineq}
\Delta_\delta\hat{u} + A\hat{u} = \left\{
\begin{array}{lll}
K(n,s)^{-1}\frac{\hat{u}^{2^*(s)-1}}{|X-X_0|^s} & {\rm if} \;
X_{0,\alpha} \xrightarrow{\alpha\rightarrow+\infty} X_0 \hbox{}
\\ 0     & {\rm if} \; 
|X_{0,\alpha}| \xrightarrow{\alpha\rightarrow+\infty} +\infty. \hbox{}
\end{array}\right.
\end{equation}

\begin{proof} We consider $R > 0$ and $\varphi \in C^\infty(\B_R(0))$. Indeed, thanks to Cartan's expansion of the metric $g$ (see for instance \cite{Lee-parker}), we have for all $\alpha > 0$ : 
$$\hat{g}_\alpha(X) =  \delta(z_\alpha) + o(\mu_\alpha) $$ 
uniformly on $\B_R(0)$. This implies that 
\begin{equation}\label{eqt-17-Th-2-Sharp-Cst-H-S-ineq}
\int_{\R^n}<\nabla\hat{u}_\alpha, \nabla\varphi>_{\hat{g}_\alpha}dv_{\hat{g}_\alpha}
= \int_{\B_R(0)}<\nabla\hat{u}_\alpha, \nabla\varphi>_\delta dX + o(\mu_\alpha)
\end{equation}
Since $\eta_\alpha\hat{u}_\alpha  \rightharpoonup \hat{u}$ on $D_1^2(\R^n)$ and $\mu_\alpha \to 0$ as $\alpha \to +\infty$ then by \eqref{eqt-17-Th-2-Sharp-Cst-H-S-ineq}, we get that 
\begin{equation}\label{eqt-18-Th-2-Sharp-Cst-H-S-ineq}
\lim_{\alpha\to +\infty}\int_{\R^n}<\nabla(\eta_\alpha\hat{u}_\alpha), \nabla\varphi>_{\hat{g}_\alpha}dv_{\hat{g}_\alpha}
= \int_{\B_R(0)}<\nabla\hat{u}, \nabla\varphi>_\delta dX.
\end{equation}
Now, since $I_\alpha(u_\alpha) = \lambda_\alpha$ and $\lambda_\alpha \in (0, K(n,s)^{-1})$ then we get 
\begin{equation}\label{eqt-19-Th-2-Sharp-Cst-H-S-ineq}
\alpha\mu_\alpha^2\int_{\B_R(0)}\hat{u}_\alpha^2dv_{\hat{g}_\alpha}  < K(n,s)^{-1}.
\end{equation}
By dominated convergence Theorem, we obtain that 
\begin{equation}\label{eqt-20-Th-2-Sharp-Cst-H-S-ineq}
\int_{\B_R(0)}\hat{u}_\alpha^2dv_{\hat{g}_\alpha} \xrightarrow{\alpha\rightarrow+\infty} \int_{\B_R(0)}\hat{u}^2dX.
\end{equation}
Together, Relations \eqref{eqt-19-Th-2-Sharp-Cst-H-S-ineq} and \eqref{eqt-20-Th-2-Sharp-Cst-H-S-ineq} give that 
$$\alpha\mu_\alpha^2 \leq \frac{K(n,s)^{-1}}{\int_{\B_R(0)}\hat{u}^2dX} + o(1).  $$
Hence, $\alpha\mu_\alpha^2 = O(1)$ and there exists $A \geq 0$ such that, up to a subsequence, $\lim_{\alpha\rightarrow+\infty}\alpha\mu_\alpha^2 = A.$  
Using dominated convergence Theorem again, we obtain that 
\begin{equation}\label{eqt-21-Th-2-Sharp-Cst-H-S-ineq}
\lim_{\alpha\rightarrow+\infty}\int_{\R^n}\alpha\mu_\alpha^2\hat{u}_\alpha\varphi dv_{\hat{g}_\alpha} = A\int_{\R^n}\hat{u}\varphi dX.
\end{equation}  
At last, we consider the sequence $(h_\alpha)_{\alpha>0}$ defined on $\B_R(0)$ by :
$$X \in \B_R(0) \mapsto h_\alpha(X) =
\frac{\hat{u}_\alpha^{2^*(s)-1}\varphi}{d_{\hat{g}_\alpha}(X,X_{0,\alpha})^s}\sqrt{det(\hat{g}_\alpha)}.$$
We claim that $$\lim_{\alpha\rightarrow+\infty}\int_{\B_R(0)}h_\alpha dX =  \left\{
\begin{array}{lll}
0 & {\rm if} \; |X_{0,\alpha}| \xrightarrow{\alpha\rightarrow+\infty} +\infty \hbox{}
\\ \int_{\B_R(0)}\frac{\varphi\hat{u}^{2^*(s)-1}dX}{|X-X_0|^s}    & {\rm if} \; 
X_{0,\alpha} \xrightarrow{\alpha\rightarrow+\infty} X_0. \hbox{}
\end{array}\right.$$
We distinguish two cases : 

\medskip\noindent{\it Case 1.3.1 :} $X_{0,\alpha} \to +\infty$ as $\alpha \to +\infty$. In this case, $\lim_{\alpha\rightarrow+\infty}d_{\hat{g}_\alpha}(X,X_{0,\alpha})^{-s} = 0$ in $C_c^0(\R^n)$. Hence 
$\lim_{\alpha\rightarrow+\infty}\int_{\B_R(0)}h_\alpha dX = 0.$
This proves the claim in case 1.3.1.

\medskip\noindent{\it Case 1.3.2 :} There exists $X_0 \in \R^n$ such that $X_{0,\alpha} \to X_0$ as $\alpha \to +\infty$. 
Let us consider  the function $h$ defined on $\B_R(0)$ by $X \mapsto h(X) = (\hat{u}^{2^*(s)-1}\varphi)(X)/|X-X_0|_\delta^s$. 
In this case, for all $\epsilon > 0$, there exists 
$\alpha_1 = \alpha_1(\epsilon) > 0$ such that for all $\alpha > \alpha_1$, $X_{0,\alpha} \in  \B_{\frac{\epsilon}{2}}(X_0)$. Then for all $X \in \B_R(0)\setminus\B_{\epsilon}(X_0)$, we have :
$|X-X_{0,\alpha}|_\delta \geq \frac{\epsilon}{2}$. This implies that there exists  a constant $C_{15} = C_{15}(\epsilon) > 0$  independent of $\alpha$ such that $|h_\alpha| \leq C_{15}\cdot|\varphi|$.
Coming back to dominated convergence Theorem, we obtain with the last relation that 
\begin{equation}\label{eqt-23-Th-2-Sharp-Cst-H-S-ineq}
\lim_{\alpha\rightarrow+\infty}\int_{\B_R(0)\setminus\B_\epsilon(X_0)}h_\alpha(X)dX
= \int_{\B_R(0)\setminus\B_\epsilon(X_0)}h(X)dX.
\end{equation}
On the other hand, we get by \eqref{eqt-12-Th-2-Sharp-Cst-H-S-ineq} and \eqref{eqt-13-Th-2-Sharp-Cst-H-S-ineq} that 
\begin{eqnarray} \label{eqt-24-Th-2-Sharp-Cst-H-S-ineq}
\left|\int_{\B_\epsilon(X_0)}h_\alpha dX\right| &\leq&  C_{10}\|\varphi\|_\infty\int_{\B_{2\epsilon}(X_{0,\alpha})}\frac{dX}{|X-X_{0,\alpha}|^s}\leq C_{16}\cdot\epsilon^{n-s}.
\end{eqnarray}
where $C_{16} > 0$ is a constant independent of $\alpha$.
In a similar way,  we prove that 
\begin{equation} \label{eqt-25-Th-2-Sharp-Cst-H-S-ineq}
\left|\int_{\B_\epsilon(X_0)}h dX\right| \leq C_{17}\epsilon^{n-s},
\end{equation}where $C_{17} > 0$ is a constant independent of $\alpha$.
Combining \eqref{eqt-23-Th-2-Sharp-Cst-H-S-ineq}, \eqref{eqt-24-Th-2-Sharp-Cst-H-S-ineq}  and \eqref{eqt-25-Th-2-Sharp-Cst-H-S-ineq}, it follows for all $\alpha > \alpha_1$ that 
$$\left|\int_{\B_R(0)}h_\alpha dX - \int_{\B_R(0)}h dX\right| = o_\alpha(1) + O(\epsilon^{n-s}).$$ 
Letting $\alpha \to +\infty$ then $\epsilon \to 0$ in the last relation, we obtain that 
\begin{equation}\label{eqt-26.0-Th-2-Sharp-Cst-H-S-ineq}
\lim_{\alpha\rightarrow+\infty}\int_{\R^n}\frac{\hat{u}_\alpha^{2^*(s)-1}\varphi}{d_{\hat{g}_\alpha}(X, X_0)^s} dv_{\hat{g}_\alpha} = \int_{\R^n}\frac{\hat{u}^{2^*(s)-1}\varphi}{|X-X_0|^s}dX.
\end{equation}This proves the claim in case 1.3.2.
Hence, by combining relations \eqref{eqt-18-Th-2-Sharp-Cst-H-S-ineq}, \eqref{eqt-21-Th-2-Sharp-Cst-H-S-ineq}  and \eqref{eqt-26.0-Th-2-Sharp-Cst-H-S-ineq} with \eqref{eqt-3.0-Th-2-Sharp-Cst-H-S-ineq},
we get \eqref{eqt-16-Th-2-Sharp-Cst-H-S-ineq}. This ends Step 1.3.\end{proof}

\medskip\noindent{\bf Step 1.4:} We claim that $X_{0,\alpha} = \mu_\alpha^{-1}\exp_{z_\alpha}^{-1}(x_0) $ is bounded when $\alpha\to +\infty$. 

\begin{proof} We proceed by contradiction and we assume that $|X_{0,\alpha}| \to +\infty$ as $\alpha \to +\infty$. We proved in Step 1.3 that we obtain  in this case :   
\begin{equation}\label{eqt-26-Th-2-Sharp-Cst-H-S-ineq}
\Delta_\delta\hat{u} + A\hat{u} = 0,
\end{equation}
weakly on $C_c^\infty(\R^n)$. Let $\hat{\eta} \in C^\infty(\R^n)$ be such that $\hat{\eta} \equiv 1$ in $\B_1(0)$, $0 \leq \hat{\eta} \leq 1$ and $\hat{\eta} \equiv 0$ in $\R^n\setminus\B_2(0)$.
Now, we consider $R > 0$ and  define the function $\hat{\eta}_R$ on $\R^n$ by $\hat{\eta}_R(X) = \eta(R^{-1} X)$. Multiplying \eqref{eqt-26-Th-2-Sharp-Cst-H-S-ineq} by $\hat{\eta}_R\hat{u}$ and 
integrating by parts, we get that
\begin{equation} \label{eqt-28-Th-2-Sharp-Cst-H-S-ineq}
 \int_{\R^n}(\nabla\hat{u}, \nabla(\hat{\eta}_R\hat{u}))_\delta dX + A\int_{\R^n}\hat{\eta}_R\hat{u}^2 = 0.
\end{equation}
To get the contradiction, we need the following lemma : 

\begin{lemma}\label{lemma-2-Th-2-Sharp-Cst-H-S-ineq} We claim that 
 $$\lim_{R\rightarrow+\infty}\int_{\R^n}(\nabla\hat{u}, \nabla(\hat{\eta}_R\hat{u}))_\delta dX = \|\hat{u}\|_{D_1^2(\R^n)}^2. $$
\end{lemma}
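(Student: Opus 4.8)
The plan is to expand the gradient of the product by the Leibniz rule and to treat the two resulting terms separately. Writing $\nabla(\hat\eta_R\hat u) = \hat\eta_R\nabla\hat u + \hat u\,\nabla\hat\eta_R$, we obtain
$$\int_{\R^n}(\nabla\hat u,\nabla(\hat\eta_R\hat u))_\delta\, dX = \int_{\R^n}\hat\eta_R|\nabla\hat u|_\delta^2\, dX + \int_{\R^n}\hat u\,(\nabla\hat u,\nabla\hat\eta_R)_\delta\, dX.$$
For the first term, since $0\le\hat\eta_R\le 1$ and $\hat\eta_R\to 1$ pointwise as $R\to+\infty$, while $|\nabla\hat u|_\delta^2\in L^1(\R^n)$ because $\hat u\in D_1^2(\R^n)$, the dominated convergence theorem gives $\int_{\R^n}\hat\eta_R|\nabla\hat u|_\delta^2\,dX\to\int_{\R^n}|\nabla\hat u|_\delta^2\,dX = \|\hat u\|_{D_1^2(\R^n)}^2$. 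Thus the lemma reduces to showing that the cross term tends to $0$.

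The crux is precisely this cross term. By construction $\nabla\hat\eta_R$ is supported in the annulus $\B_{2R}(0)\setminus\B_R(0)$ and satisfies $|\nabla\hat\eta_R|_\delta\le C R^{-1}$, so Cauchy--Schwarz yields
$$\left|\int_{\R^n}\hat u\,(\nabla\hat u,\nabla\hat\eta_R)_\delta\,dX\right| \le \frac{C}{R}\left(\int_{\B_{2R}(0)\setminus\B_R(0)}|\nabla\hat u|_\delta^2\,dX\right)^{1/2}\left(\int_{\B_{2R}(0)\setminus\B_R(0)}\hat u^2\,dX\right)^{1/2}.$$
To absorb the factor $R^{-1}$, I would estimate the second factor by Hölder's inequality with exponents $\frac{n}{n-2}$ and $\frac{n}{2}$, using $|\B_{2R}(0)\setminus\B_R(0)|\le C R^n$ and the Sobolev embedding $\hat u\in L^{\frac{2n}{n-2}}(\R^n)$, which gives
$$\frac{C}{R}\left(\int_{\B_{2R}(0)\setminus\B_R(0)}\hat u^2\,dX\right)^{1/2}\le C\left(\int_{\B_{2R}(0)\setminus\B_R(0)}\hat u^{\frac{2n}{n-2}}\,dX\right)^{\frac{n-2}{2n}}.$$
The scaling is exactly balanced here: the annulus volume contributes a factor $R$ that cancels the $R^{-1}$ coming from $\nabla\hat\eta_R$.

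Combining the two displays, the cross term is controlled by the product of two tails of convergent integrals, namely $\int_{\B_{2R}(0)\setminus\B_R(0)}|\nabla\hat u|_\delta^2\,dX$ and $\int_{\B_{2R}(0)\setminus\B_R(0)}\hat u^{\frac{2n}{n-2}}\,dX$, both of which tend to $0$ as $R\to+\infty$ since $|\nabla\hat u|_\delta^2$ and $\hat u^{\frac{2n}{n-2}}$ are in $L^1(\R^n)$. Hence the cross term vanishes in the limit, and the lemma follows. The only delicate point is the exponent bookkeeping in the Hölder step, where the dimensional scaling must be matched precisely so that the power of $R$ cancels; once this is verified the conclusion is immediate.
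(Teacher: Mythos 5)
Your proof is correct and follows essentially the same route as the paper: the same Leibniz decomposition, dominated convergence for the principal term, and Cauchy--Schwarz followed by H\"older on the annulus (the volume factor $R^{2}$ cancelling the $R^{-2}$ coming from $|\nabla\hat\eta_R|^2$) together with the Sobolev embedding $D_1^2(\R^n)\subset L^{2^*}(\R^n)$ for the cross term. The only cosmetic difference is that you keep the gradient factor localized to the annulus, obtaining a product of two vanishing tails, whereas the paper bounds that factor by the full norm $\|\hat u\|_{D_1^2(\R^n)}$; both arguments conclude identically.
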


\medskip\noindent{\it Proof of Lemma \ref{lemma-2-Th-2-Sharp-Cst-H-S-ineq}:} 
 Indeed, we have that : 
\begin{equation}\label{eqt-1-lemma-1-Th-2-Sharp-Cst-H-S-ineq}
 \int_{\R^n}(\nabla\hat{u}, \nabla(\hat{\eta}_R\hat{u}))_\delta dX = \int_{\R^n}\hat{\eta}_R|\nabla\hat{u}|_\delta^2dX + \int_{\R^n}(\nabla\hat{u},\nabla(\hat{\eta}_R)\hat{u})_\delta dX.
\end{equation}
Applying dominated convergence Theorem, we get that  
\begin{equation}\label{eqt-2-lemma-1-Th-2-Sharp-Cst-H-S-ineq}
 \lim_{R\rightarrow+\infty}\int_{\R^n}\hat{\eta}_R|\nabla\hat{u}|_\delta^2dX = \|\hat{u}\|_{D_1^2(\R^n)}^2.
\end{equation}
On the other hand, we obtain by Inequalities of Cauchy-Schwarz  then by H\"older's inequalities that 
\begin{eqnarray*}
 \left |\int_{\R^n}(\nabla\hat{u},\nabla(\hat{\eta}_R)\hat{u})_\delta dX \right| &\leq&
\|\hat{u}\|_{D_1^2(\R^n)}^2\times\frac{C_{18}}{R^2}\int_{\B_{2R}(0)\setminus\B_R(0)}\hat{u}^2dX
\\ &\leq& C_{19}\|\hat{u}\|_{D_1^2(\R^n)}^2\times
\left(\int_{\B_{2R}(0)\setminus\B_R(0)}\hat{u}^{2^*}dX\right)^{\frac{2}{2^*}},
\end{eqnarray*}
where $2^*=2n/(n-2)$ and $C_{18}, C_{19} > 0$ are independents of $\alpha$. It follows from the last relation, Sobolev's embedding theorem and the dominated convergence theorem that  
\begin{equation}\label{eqt-3-lemma-1-Th-2-Sharp-Cst-H-S-ineq}
\lim_{R\to +\infty}\int_{\R^n}(\nabla\hat{u},\nabla(\hat{\eta}_R)\hat{u})_\delta dX  = 0.
\end{equation}
Letting $R \to +\infty$ in \eqref{eqt-1-lemma-1-Th-2-Sharp-Cst-H-S-ineq} and thanks to relations \eqref{eqt-2-lemma-1-Th-2-Sharp-Cst-H-S-ineq} and \eqref{eqt-3-lemma-1-Th-2-Sharp-Cst-H-S-ineq}, we get the claim. This proves Lemma \ref{lemma-2-Th-2-Sharp-Cst-H-S-ineq}.

\medskip\noindent Now, going back to relation \eqref{eqt-28-Th-2-Sharp-Cst-H-S-ineq} and thanks to Lemma \ref{lemma-2-Th-2-Sharp-Cst-H-S-ineq}, we get that 
$$\|\hat{u}\|_{D_1^2(\R^n)}^2 + A\int_{\R^n}\hat{\eta}_R\hat{u}^2 = o_R(1),$$
where $\lim_{R\to +\infty}o_R(1)=0$. Thus is a contradiction since $\hat{\eta}_R\hat{u}^2 \geq 0$ and $\hat{u} \not\equiv 0$.  This contradiction completes the proof of Step 1.4.\end{proof}
\medskip\noindent As a consequence, Step 1.4 implies that $|X_{0,\alpha}|=O(1)$ when $\alpha\to +\infty$, which yields \eqref{concl:dist}. Therefore, there exists $X_0 \in \R^n$ such that the function $\hat{u}$ verifies in the distribution sense : 
\begin{equation} \label{eqt-29-Th-2-Sharp-Cst-H-S-ineq}
 \Delta_\delta\hat{u} + A\hat{u} = K(n,s)^{-1}\frac{\hat{u}^{2^*(s)-1}}{|X-X_0|^s}.
\end{equation}

\medskip\noindent{\bf Step 1.5:} We claim that $A = 0$. 

\begin{proof} We proceed by contradiction and assume that $A > 0$. At first, let us prove that $\hat{u} \in L^2(\R^n)$. Multiplying \eqref{eqt-29-Th-2-Sharp-Cst-H-S-ineq} by $\hat{\eta}_R\hat{u}$ and integrating over $\R^n$, we obtain  
\begin{equation}\label{eq:1}
\int_{\R^n}(\nabla\hat{u}, \nabla(\hat{\eta}_R\hat{u}))_\delta dX + A\int_{\R^n}\hat{\eta}_R\hat{u}^2dX = K(n,s)^{-1}  \int_{\R^n}\hat{\eta}_R\frac{\hat{u}^{2^*(s)}}{|X-X_0|^s}\, dX.
\end{equation}

\medskip\noindent We claim that $\hat{u}^{2^*(s)}|X-X_0|^{-s}\in L^1(\R^n)$. We prove the claim. For all $\alpha > 0$, we have that $\int_M\frac{u_\alpha^{2^*(s)}}{d_g(x,x_0)^s}dv_g = 1$. Then for $R > 0$, we obtain by a change of variable that $\int_{\B_R(0)}\frac{|\eta_\alpha\hat{u}_\alpha|^{2^*(s)}}{d_{\hat{g}_\alpha}(X,X_{0,\alpha})^s}dv_{\hat{g}_\alpha} \leq 1$. 
Letting $\alpha \to +\infty$ then $R \to +\infty$, we get that 
\begin{equation} \label{eqt-32-Th-2-Sharp-Cst-H-S-ineq}
 \int_{\R^n}\frac{\hat{u}^{2^*(s)}}{|X-X_0|^s}\, dX \leq 1.
\end{equation}
This proves the claim.

\smallskip\noindent Letting $R \to +\infty$ in \eqref{eq:1} and using \eqref{eqt-32-Th-2-Sharp-Cst-H-S-ineq}, we get, thanks to Lemma \ref{lemma-2-Th-2-Sharp-Cst-H-S-ineq}, that $\lim_{R\to +\infty}A\int_{\R^n}\hat{\eta}_R\hat{u}^2dX \leq C_{20}$, where
$C_{20} > 0$ is independent of $\alpha$. Applying Beppo-Livi Theorem 
in the last relation, we get that $\hat{u}^2 \in L^1(\R^n)$. 
\\ Now, we consider the function 
\begin{eqnarray*}
f: \R^n\setminus\{X_0\}\times\R &\rightarrow& \R 
\\ (X,v) &\mapsto& f(X,v) = \left(\frac{K(n,s)^{-1}|v|^{2^*(s)-2}}{|X-X_0|_\delta^s}-A\right)v.
\end{eqnarray*}
$f$ is clearly continuous on $\R^n\setminus\{X_0\}\times\R$ and $\hat{u}$ verifies $\Delta_\delta\hat{u} = f(X,\hat{u})$, it follows by standard elliptic theory (see for instance \cite{Gil-Tru}) that 
$\hat{u} \in  C_{c}^\infty(\R^n\setminus\{X_0\}) \cap H_{2,loc}^1(\R^n\setminus\{X_0\}) $. By the Claim 5.3 in \cite{Rob-Pu-F.Rob}
 (once one checks that  $\hat{u}$ and $f$ satisfy all the condition of the Claim), we obtain after simple computations that $A\int_{\R^n}\hat{u}^2dX =0$. A contradiction since 
$\hat{u} \in L^2(\R^n)$ and $\hat{u} \not\equiv 0$. This ends the proof of Step 1.5.\end{proof}

\medskip\noindent As a consequence, Step 1.6 implies that there exists $X_0 \in \R^n$ such that the function $\hat{u}$ verifies in the distribution sense  : 
\begin{equation} \label{eqt-30-Th-2-Sharp-Cst-H-S-ineq}
 \Delta_\delta\hat{u} = K(n,s)^{-1}\frac{\hat{u}^{\crit-1}}{|X-X_0|^s}.
\end{equation}

\medskip\noindent{\bf Step 1.6 :} We claim that there exists $a >0$ such that 
\begin{equation}\label{eq:h:u}
\hat{u}(X) = \left(\frac{a^{\frac{2-s}{2}}k^{\frac{2-s}{2}}}{a^{2-s} + |X-X_0|^{2-s}}\right)^{\frac{n-2}{2-s}}\hbox{ for all }X \in \R^n,
\end{equation}
where $k^{2-s}:=(n-s)(n-2)K(n,s)$.

\begin{proof} Indeed,  Multiplying \eqref{eqt-30-Th-2-Sharp-Cst-H-S-ineq} by $\hat{\eta}_R\hat{u}$, integrating over $\R^n$ and letting $R\to +\infty$, we obtain  that
$$\int_{\R^n}|\nabla\hat{u}|^2dX = K(n,s)^{-1}  \int_{\R^n}\frac{\hat{u}^{2^*(s)}}{|X-X_0|^s}.$$
Thanks to the definition of $K(n,s)$ and with the last relation, we get that  
\begin{equation} \label{eqt-31-Th-2-Sharp-Cst-H-S-ineq}
 \int_{\R^n}\frac{\hat{u}^{2^*(s)}}{|X-X_0|^s} \geq 1.
\end{equation}
Inequalities \eqref{eqt-31-Th-2-Sharp-Cst-H-S-ineq} and \eqref{eqt-32-Th-2-Sharp-Cst-H-S-ineq} give that $$ \int_{\R^n}\frac{\hat{u}^{2^*(s)}}{|X-X_0|^s} = 1. $$
This implies that, up to a translation, $\hat{u}$ is a minimizer for the Euclidean Hardy-Sobolev inequality. By Lemma 3 in \cite{DELT} (see Chou-Chu \cite{Chou-Chu}, Horiuchi \cite{Horiushi} and also Theorem 4.3 in Lieb \cite{Lieb-1} and Theorem 4 in Catrina-Wang \cite{catrinawang}), 
we get that $\hat{u}(X) = b\left(c + |X-X_0|^{2-s}\right)^{-\frac{n-2}{2-s}}$ for some $b\neq 0$ and $c>0$. Since $\hat{u}$ satisfies \eqref{eqt-30-Th-2-Sharp-Cst-H-S-ineq}, we get \eqref{eq:h:u}. This proves the claim.\end{proof}    

\medskip\noindent{\bf Step 1.7 :} We claim that, up to a subsequence, $\eta_\alpha\hat{u}_\alpha \to \hat{u}$ in $C_{loc}^{0,\beta}(\R^n)$, for all $\beta \in (0, \min(1,2-s))$,

\begin{proof} Given $R' > 0$, we get by Step 1.0 (equation \eqref{eqt-3.0-Th-2-Sharp-Cst-H-S-ineq}) and Step 1.6, up to a subsequence of $(\hat{u}_\alpha)_{\alpha>0}$, that 
$\Delta_{\hat{g}_\alpha}\hat{u}_\alpha = F_\alpha$ on $C^\infty(\B_R(0))$  where $F_\alpha(X) =  - \alpha\mu_\alpha^2\hat{u}_\alpha + \lambda_\alpha\frac{\hat{u}_\alpha^{2^*(s)-1}}{d_{\hat{g}_\alpha}(X,X_{0,\alpha})^s}$. 
We consider $p \in (n/2, \inf(n/s, n))$. Knowing that $\hat{u}_\alpha \leq 1$ leads to $\Vert F_\alpha \Vert_{ L^p(\B_{R'}(0))}=O(1)$. 

\medskip\noindent It follows by standard elliptic theory (see \cite{Gil-Tru})
that for all $\beta \in (0, \min(1,2-s))$ and all $R < R'$, $(\hat{u}_\alpha) \in C^{0,\beta}(\B_R(0))$ and there exists $C_{21} = C_{21}(M,g,s,R,R',\beta) > 0$  such that $\|\hat{u}_\alpha\|_{C^{0,\beta}(\B_R(0))} \leq C$. 
Therefore the convergence holds in $C^{0,\beta'}(\B_R(0))$ for all $\beta'<\beta$. This ends the proof of Step 1.7.\end{proof}

\medskip\noindent Theorem \ref{Th2-Sharp-Cst-H-S-ineq} follows from Steps 1.0 to 1.7.

\medskip
\begin{corollary}\label{Cor-Th2-Sharp-Cst-H-S-ineq} Up to a subsequence of $(x_\alpha)_{\alpha>0}$, we have  $d_g(x_0, x_\alpha) = o(\mu_\alpha)$ when $\alpha\to +\infty$. Moreover, $(\eta_\alpha \hat{u}_\alpha)$ goes weakly to $\hat{u}$ in $D_1^2(\R^n)$ and strongly in $C^{0,\beta}_{loc}(\R^n)$ for $\beta\in (0,\inf\{1,2-s\})$ where
$\hat{u}(X) = \left(\frac{k^{2-s}}{k^{2-s} + |X|^{2-s}}\right)^{\frac{n-2}{2-s}}\hbox{ for all }X\in\R^n$
with $k^{2-s} = (n-2)(n-s)K(n,s)$. In addition,
$$\lim_{R\to +\infty}\lim_{\alpha\to +\infty}\int_{M\setminus\B_{R\mu_\alpha}(x_0)}\frac{u_\alpha^{2^*(s)}}{d_g(x,x_0)^s}dv_g = 0. $$
\end{corollary}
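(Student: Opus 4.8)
The plan is to apply Theorem \ref{Th2-Sharp-Cst-H-S-ineq} to the particular choice $z_\alpha=x_\alpha$ and then read off the three assertions. First I would verify the hypothesis \eqref{hyp:dist}: with $z_\alpha=x_\alpha$ one has $d_g(x_\alpha,z_\alpha)=0=O(\mu_\alpha)$ trivially, and $z_\alpha=x_\alpha\to x_0$ by Proposition \ref{prop-3-Th-2-Sharp-Cst-H-S-ineq}, so the theorem applies. It produces, up to a subsequence, a limit profile $\hat u$ centered at some $X_0\in\R^n$, with $\eta_\alpha\hat u_\alpha\to\hat u$ weakly in $D_1^2(\R^n)$ and in $C^{0,\beta}_{loc}(\R^n)$, and it gives \eqref{concl:dist}, i.e. that $X_{0,\alpha}=\mu_\alpha^{-1}\exp_{x_\alpha}^{-1}(x_0)$ is bounded and (along the subsequence) converges to $X_0$.

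The next step is to locate $X_0$ and to fix the free parameter $a$. Since $z_\alpha=x_\alpha$ is precisely the maximum point of $u_\alpha$, the rescaled function satisfies $\hat u_\alpha(0)=\mu_\alpha^{\frac n2-1}u_\alpha(x_\alpha)=1$ by \eqref{eqt-2-Th-2-Sharp-Cst-H-S-ineq}, while $\hat u_\alpha\le 1$ everywhere. Passing to the $C^0_{loc}$ limit gives $\hat u(0)=1$ and $\hat u\le 1$, so $\hat u$ attains its maximal value $1$ at the origin. Because the extremal of Step 1.6 is radially strictly decreasing about $X_0$ with maximal value $(k/a)^{\frac{n-2}{2}}$, I would conclude $X_0=0$ and $a=k$. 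This yields at once the explicit formula $\hat u(X)=\left(\frac{k^{2-s}}{k^{2-s}+|X|^{2-s}}\right)^{\frac{n-2}{2-s}}$, and, since $|X_{0,\alpha}|=\mu_\alpha^{-1}d_g(x_\alpha,x_0)\to|X_0|=0$, the sharpened estimate $d_g(x_0,x_\alpha)=o(\mu_\alpha)$.

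For the concentration statement I would show that the mass captured in $\B_{R\mu_\alpha}(x_0)$ tends to the full mass $1$. Writing $x=\exp_{x_\alpha}(\mu_\alpha X)$ and using that the powers of $\mu_\alpha$ cancel exactly for the critical exponent $\crit$ (the same computation as in Step 1.0), the change of variables identifies $\int_{\B_{R\mu_\alpha}(x_0)}\frac{u_\alpha^{\crit}}{d_g(x,x_0)^s}\,dv_g$ with $\int_{\B}\frac{\hat u_\alpha^{\crit}}{d_{\hat g_\alpha}(X,X_{0,\alpha})^s}\,dv_{\hat g_\alpha}$, where $\B=\{X:\ d_{\hat g_\alpha}(X,X_{0,\alpha})<R\}$ is the $\hat g_\alpha$-ball of radius $R$ about $X_{0,\alpha}$ (note $R\mu_\alpha<i_g(M)$ for $\alpha$ large since $\mu_\alpha\to 0$). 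Using $\hat g_\alpha\to\delta$, $X_{0,\alpha}\to 0$ and $\hat u_\alpha\to\hat u$ uniformly on compact sets, with the singular weight controlled exactly as in Case 1.3.2 (the contribution of a small ball $\B_\epsilon(X_{0,\alpha})$ is $O(\epsilon^{n-s})$ uniformly in $\alpha$ because $\hat u_\alpha\le1$ and $s<n$), I would pass to the limit to obtain $\lim_{\alpha\to+\infty}\int_{\B_{R\mu_\alpha}(x_0)}\frac{u_\alpha^{\crit}}{d_g(x,x_0)^s}\,dv_g=\int_{\B_R(0)}\frac{\hat u^{\crit}}{|X|^s}\,dX$. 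Letting $R\to+\infty$ and invoking $\int_{\R^n}\frac{\hat u^{\crit}}{|X|^s}\,dX=1$ (established in Step 1.6 from \eqref{eqt-30-Th-2-Sharp-Cst-H-S-ineq}), this double limit equals $1$; since $\int_M\frac{u_\alpha^{\crit}}{d_g(x,x_0)^s}\,dv_g=1$ for every $\alpha$, subtraction gives the vanishing of the complementary integral.

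I expect the main obstacle to be this last paragraph, namely justifying the interchange of the limits in $\alpha$ and in $R$ while uniformly controlling the singular weight $d_g(\cdot,x_0)^{-s}$, so that no mass escapes either to the moving singularity $X_{0,\alpha}\to 0$ or to the region $|X|\to+\infty$; the first is handled by the $O(\epsilon^{n-s})$ estimate above and the second by the fact that the full limiting integral is already normalized to $1$.
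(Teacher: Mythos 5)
Your proposal is correct, and its first half is exactly the paper's argument: apply Theorem \ref{Th2-Sharp-Cst-H-S-ineq} with $z_\alpha=x_\alpha$, use $\hat u_\alpha(0)=1$ together with $\hat u_\alpha\le 1$ to see that the limit profile attains its maximum $1$ at the origin, force $X_0=0$ and $a=k$ from the explicit form of the extremal, and conclude $d_g(x_0,x_\alpha)=\mu_\alpha|X_{0,\alpha}|=o(\mu_\alpha)$. Where you genuinely diverge is the concentration limit. The paper, having just established $d_g(x_0,x_\alpha)=o(\mu_\alpha)$, applies Theorem \ref{Th2-Sharp-Cst-H-S-ineq} a \emph{second} time, now with $z_\alpha=x_0$ (the hypothesis \eqref{hyp:dist} being verified precisely by \eqref{dist:xalpha}). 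This re-centering buys exactness: by Gauss's lemma $d_g\bigl(\exp_{x_0}(\mu_\alpha X),x_0\bigr)=\mu_\alpha|X|$, so the weight pulls back to exactly $|X|^{-s}$, the ball $\B_{R\mu_\alpha}(x_0)$ pulls back to exactly $\B_R(0)$, and $\lim_{\alpha}\int_{\B_R(0)}|X|^{-s}\hat u_\alpha^{\crit}\,dv_{\hat g_{0,\alpha}}=\int_{\B_R(0)}|X|^{-s}\hat u^{\crit}\,dX$ becomes a clean dominated-convergence statement with a fixed singularity and a fixed domain. You instead stay in the chart centered at $x_\alpha$, so the singularity sits at the moving point $X_{0,\alpha}\to 0$ and the integration domain is the moving $\hat g_\alpha$-ball about $X_{0,\alpha}$; this forces you to redo the Case 1.3.2 machinery (the uniform $O(\epsilon^{n-s})$ bound near the singularity, uniform convergence away from it, plus a word on the boundary mismatch between the moving ball and $\B_R(0)$, which your sketch passes over but which is harmless since the integrand is bounded there). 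Both routes land on $\lim_{R\to+\infty}\lim_{\alpha\to+\infty}\int_{\B_{R\mu_\alpha}(x_0)}d_g(x,x_0)^{-s}u_\alpha^{\crit}\,dv_g=\int_{\R^n}|X|^{-s}\hat u^{\crit}\,dX=1$ and conclude by subtracting from $\|u_\alpha\|_{\crit,s}=1$; the paper's second invocation of the blow-up theorem is a simplification device that trades your technical estimates for work already done. One small remark: no interchange of limits is actually at stake in either version, since the statement asks for the iterated limit $\lim_R\lim_\alpha$ in exactly the order you compute it, so the obstacle you flag at the end is not one; your treatment of it is nonetheless correct.
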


\begin{proof} At first, we apply Theorem \ref{Th2-Sharp-Cst-H-S-ineq} with $z_\alpha = x_\alpha$. In this case, we get that
$\eta_\alpha\hat{u}_\alpha \to \hat{u}$ in $C_c^0(\R^n)$ as $\alpha\to +\infty$. This implies that  $\lim_{\alpha \to +\infty}\hat{u}_\alpha(0) = \hat{u}(0)$, but $\hat{u}_\alpha(0) = 1$ then  $\hat{u}(0) = 1$. 

\medskip\noindent Since $\hat{u}(0) = 1$ and $\|\hat{u}\|_\infty = 1 $, Then $0$ is a maximum of $\hat{u}$. 
On the other hand, we can see from the explicit form of $\hat{u}$ in Theorem \ref{Th2-Sharp-Cst-H-S-ineq} that for all $X \in \R^n$, $\hat{u}(X) \leq \hat{u}(X_0)$. Therefore $X_0 = 0$. 
Hence, we obtain, up to a subsequence of $(z_\alpha)_{\alpha>0}$, that 
\begin{equation}\label{dist:xalpha}
d_g(x_\alpha, x_0) = \mu_\alpha d_{\hat{g}_\alpha}(X_{0,\alpha}, 0) = \mu_\alpha|X_{0,\alpha}| = o(\mu_\alpha).
\end{equation} 

\medskip\noindent We now apply Theorem \ref{Th2-Sharp-Cst-H-S-ineq} with $z_\alpha = x_0$: this is possible due to \eqref{dist:xalpha}. With the change of variable $X = \mu_\alpha^{-1}\exp^{-1}_{x_0}(x)$, we write that   
$$\int_{\B_{R\mu_\alpha}(x_0)}\frac{|u_\alpha|^{2^*(s)}}{d_g(x,x_0)^s}dv_g  = \int_{\B_R(0)}\frac{|\hat{u}_\alpha|^{2^*(s)}}{|X|^s}dv_{\hat{g}_{0,\alpha}}$$
with $\hat{g}_{0,\alpha}(X) = \exp_{x_0}^*g(\mu_\alpha X)$, we get by applying the dominated convergence Theorem twice and thanks to Theorem \ref{Th2-Sharp-Cst-H-S-ineq} that
\begin{eqnarray}
\lim_{R\to +\infty}\lim_{\alpha\to +\infty}\int_{\B_{R\mu_\alpha}(x_0)}\frac{u_\alpha^{2^*(s)}}{d_g(x,x_0)^s}dv_g  &=& \lim_{R\to +\infty}\lim_{\alpha\to +\infty}\int_{\B_R(0)}\frac{|\hat{u}_\alpha|^{2^*(s)}}{|X|^s}dv_{\hat{g}_{0,\alpha}}\nonumber
\\ &=& \int_{\R^n}\frac{\hat{u}^{2^*(s)}}{|X|^s}dX=1.\label{eq:int:1}
\end{eqnarray}
Corollary \ref{Cor-Th2-Sharp-Cst-H-S-ineq} follows from this latest relation and $\|u_\alpha\|_{\crit,s} = 1 $.\end{proof}

\section{Proof of Theorem \ref{Main-Th-Sharp-Cst-H-S-ineq}}\label{sec:proofth}
In order to prove Theorem \ref{Main-Th-Sharp-Cst-H-S-ineq},  we proceed by contradiction and assume that for all $\alpha > 0$, there exists $\tilde{u}_\alpha \in H_1^2(M)$ such that
\begin{equation}\label{eqt-1-Main-Th-Sharp-Cst-H-S-ineq}
\left(\int_M\frac{|\tilde{u}_\alpha|^{2^*(s)}}{d_g(x,x_0)^s}dv_g\right)^{\frac{2}{2^*(s)}} > K(n,s)\left(\int_M|\nabla \tilde{u}_\alpha|^2dv_g + \alpha\int_M\tilde{u}_\alpha^2dv_g\right). 
\end{equation}
We proceed in several steps : 

\medskip\noindent{\bf Step 2.1:} We claim that for all $\alpha > 0$ there exists $u_\alpha \in C^{0,\beta}(M) \cap
C^{2,\theta}(M\setminus\{x_0\}),  \beta \in (0,\min(1,2-s)), \theta \in (0,1)$  such that $u_\alpha > 0$ and  verifies 
\begin{equation}\label{eqt-2-Main-Th-Sharp-Cst-H-S-ineq}
\Delta_gu_\alpha + \alpha u_\alpha = \lambda_\alpha\frac{u_\alpha^{2^*(s)-1}}{d_g(x,x_0)^s}
\end{equation}
with $\lambda_\alpha \in (0, K(n,s)^{-1}),  \lambda_\alpha = I_\alpha(u_\alpha)$ and $\int_M\frac{u_\alpha^{2^*(s)}}{d_g(x,x_0)^s}dv_g = 1$. 

\begin{proof} Given $\alpha > 0$. By \eqref{eqt-1-Main-Th-Sharp-Cst-H-S-ineq}, there exists $\tilde{u}_\alpha \in H_1^2(M)$ that verifies $I_\alpha(\tilde{u}_\alpha) <  K(n,s)^{-1}$. 
This implies that $\lambda_\alpha:=\inf_{v\in H_1^2(M)\setminus\{0\}}I_\alpha(v) <  K(n,s)^{-1}$. Hence, thanks to Jaber \cite{jaber:test:fcts} (Theorem 4, see also  Thiam \cite{Elhadji-A.T.}), we get the Claim of Step 2.1. \end{proof}

\medskip\noindent{\bf Step 2.2:} Following Druet arguments in \cite{Druet-1} (see also Hebey \cite{Hebey-1}),  we claim that there exists $C_{22} > 0$ such that for all $x \in M$ et $\alpha > 0$, we have :
\begin{equation}\label{Step-3-Main-Th-Sharp-Cst-H-S-ineq}
d_g(x_0,x)^{\frac{n}{2}-1}u_\alpha(x) \leq C_{22}.
\end{equation}

\begin{proof} We proceed by contradiction and assume that there exists a sequence $(y_\alpha)_{\alpha>0} \in M$ such that 
\begin{equation}\label{eqt-14.1-Main-Th-Sharp-Cst-H-S-ineq}
\sup_{x\in M}d_g(x_0, x)^{\frac{n}{2}-1}u_\alpha(x) = d_g(x_0,y_\alpha)^{\frac{n}{2}-1}u_\alpha(y_\alpha)
\end{equation}
and
\begin{equation}\label{eqt-14.0-Main-Th-Sharp-Cst-H-S-ineq}
\lim_{\alpha\rightarrow+\infty}d_g(x_0,y_\alpha)^{\frac{n}{2}-1}u_\alpha(y_\alpha) = +\infty.
\end{equation}
Since $M$ is compact, we then obtain that  $\lim_{\alpha\rightarrow+\infty}u_\alpha(y_\alpha) = +\infty$. Thanks to Proposition \eqref{prop-1-Th-2-Sharp-Cst-H-S-ineq} , 
we get that, up to a subsequence,  $y_\alpha \to x_0$ as $\alpha \to \infty$. 
Now, for all $\alpha>0$, we let $\hat{r}_\alpha = u_\alpha(y_\alpha)^{\frac{-2}{n-2}}$. 

\medskip\noindent We claim that for a given $\alpha > 0$ and $R > 0$, 
\begin{equation}\label{eqt-14-Main-Th-Sharp-Cst-H-S-ineq}
\int_{\B_{\hat{r}_\alpha}(y_\alpha)}\frac{u_\alpha^{2^*(s)}}{d_g(x,x_0)^s}dv_g  = \varepsilon_R + o(1),\hbox{ where }\lim_{R\rightarrow+\infty}\varepsilon_R = 0.
 \end{equation}
\noindent Indeed, we fix $\rho > 0$. Since $y_\alpha \to x_0$ et $\hat{r}_\alpha \to 0$ as $\alpha \rightarrow +\infty$
then we write, up to a subsequence of $(y_\alpha)_{\alpha>0}$, that : 
\begin{equation}\label{eqt-15-Main-Th-Sharp-Cst-H-S-ineq}
\int_{\B_{\hat{r}_\alpha}(y_\alpha)}\frac{u_\alpha^{2^*(s)}}{d_g(x,x_0)^s}dv_g = 
\int_{\B_{\hat{r}_\alpha}(y_\alpha)\cap\B_{\rho}(x_0)}\frac{u_\alpha^{2^*(s)}}{d_g(x,x_0)^s}dv_g.
\end{equation}
Given $R >0$. Thanks to Corollary \ref{Cor-Th2-Sharp-Cst-H-S-ineq}, we have that
$$\int_{\B_{\rho}(x_0)\setminus\B_{R\mu_\alpha}(x_0)}\frac{u_\alpha^{2^*(s)}}{d_g(x,x_0)^s}dv_g = \varepsilon_R + o(1), $$
where the function $\varepsilon_R : \R \to \R$ verifies  $\lim_{R\rightarrow+\infty}\varepsilon_R = 0$. Therefore, 
\begin{eqnarray}\label{eqt-15.1-Main-Th-Sharp-Cst-H-S-ineq}
\int_{\B_{\hat{r}_\alpha}(y_\alpha)}\frac{u_\alpha^{2^*(s)}}{d_g(x,x_0)^s}dv_g &=& 
\int_{\B_{\hat{r}_\alpha}(y_\alpha)\cap\B_{\rho}(x_0)}\frac{u_\alpha^{2^*(s)}}{d_g(x,x_0)^s}dv_g \nonumber
\\ &=& \int_{\B_{\hat{r}_\alpha}(y_\alpha)\cap\left(\B_{\rho}(x_0)\setminus\B_{R\mu_\alpha}(x_0)\right)}\frac{u_\alpha^{2^*(s)}}{d_g(x,x_0)^s}dv_g\nonumber
\\ && + \int_{\B_{\hat{r}_\alpha}(y_\alpha)\cap\B_{R\mu_\alpha}(x_0)}\frac{u_\alpha^{2^*(s)}}{d_g(x,x_0)^s}dv_g\nonumber
\\ &\leq& \varepsilon_R + o(1) + \int_{\B_{\hat{r}_\alpha}(y_\alpha)\cap\B_{R\mu_\alpha}(x_0)}\frac{u_\alpha^{2^*(s)}}{d_g(x,x_0)^s}dv_g,
\end{eqnarray}
where the function $\varepsilon_R : \R \to \R$ verifies  $\lim_{R\rightarrow+\infty}\varepsilon_R = 0$
We distinguish two cases :

\medskip\noindent{\it Case 2.2.1 :} $\B_{\hat{r}_\alpha}(y_\alpha)\cap\B_{R\mu_\alpha}(x_0) = \phi$. In this case, we obtain  
immediately \eqref{eqt-14-Main-Th-Sharp-Cst-H-S-ineq} from \eqref{eqt-15.1-Main-Th-Sharp-Cst-H-S-ineq}.

\medskip\noindent{\it Case 2.2.2} :  $\B_{\hat{r}_\alpha}(y_\alpha)\cap\B_{R\mu_\alpha}(x_0) \neq \phi$. In this case, we obtain that 
\begin{equation}\label{eqt-16-Main-Th-Sharp-Cst-H-S-ineq}
d_g(x_0, y_\alpha) \leq \hat{r}_\alpha + R\mu_\alpha.
\end{equation}
By \eqref{eqt-14.0-Main-Th-Sharp-Cst-H-S-ineq}, we get that 
\begin{equation}\label{eqt-17-Main-Th-Sharp-Cst-H-S-ineq}
\lim_{\alpha\rightarrow+\infty}\frac{\hat{r}_\alpha}{d_g(x_0, y_\alpha)} = 0.
\end{equation}
Together, relations \eqref{eqt-16-Main-Th-Sharp-Cst-H-S-ineq} and \eqref{eqt-17-Main-Th-Sharp-Cst-H-S-ineq} give that 
\begin{equation}\label{eqt-18-Main-Th-Sharp-Cst-H-S-ineq}
\frac{\hat{r}_\alpha}{\mu_\alpha} =  o(1) \ {\rm and} \ d_g(x_0, y_\alpha) = O(\mu_\alpha).
\end{equation}
Independently, we consider an exponential chart $(\Omega_0, \exp_{x_0}^{-1})$ centered 
at $x_0$ such that $\exp_{x_0}^{-1}(\Omega_0) = \B_{R_0}(0), R_0 \in (0, i_g(M))$.  Under the same assumptions of Theorem \ref{Th2-Sharp-Cst-H-S-ineq}, 
we assume that $z_\alpha = x_0$ and we let $\tilde{Y}_\alpha = \mu_\alpha^{-1}\exp_{x_0}^{-1}(y_\alpha)$
and $\hat{g}_{0,\alpha} : X \in \B_{R_0}(0) \mapsto   \exp_{x_0}^*g(\mu_\alpha X)$.

\medskip\noindent By compactness arguments, there exists a constant $C_{23} > 1$ such that for all $X, Y \in \R^n$, $\mu_\alpha|X|, \mu_\alpha|Y| < R_0$ :
$$C_{23}^{-1}|X-Y| \leq d_{\hat{g}_{0,\alpha}}(X,Y) \leq  C_{23}|X-Y|.$$ 
Then we have :
\begin{equation}\label{eqt-19-Main-Th-Sharp-Cst-H-S-ineq}
|\tilde{Y}_\alpha| = O(1)    \ {\rm and} \ \mu_\alpha^{-1}\exp_{x_0}^{-1}(\B_{\hat{r}_\alpha}(y_\alpha)) \subseteq \B_{C_{23}\frac{\hat{r}_\alpha}{\mu_\alpha}}(\tilde{Y}_\alpha).
\end{equation}
Using \eqref{eqt-19-Main-Th-Sharp-Cst-H-S-ineq} and the change of variable $ X = \mu_\alpha^{-1}\exp_{x_0}^{-1}(x)$, we obtain : 
$$\int_{\B_{\hat{r}_\alpha}(y_\alpha)\cap\B_{R\mu_\alpha}(x_0)}\frac{u_\alpha^{2^*(s)}}{d_g(x,x_0)^s}dv_g \leq  \int_{\B_{C_{23}\frac{\hat{r}_\alpha}{\mu_\alpha}}(\tilde{Y}_\alpha)}
\frac{\hat{u}_\alpha^{2^*(s)}}{d_{\hat{g}_{0,\alpha}}(X,0)^s}dv_{\hat{g}_0}. $$
By dominated convergence Theorem, it follows that $$\int_{\B_{\hat{r}_\alpha}(y_\alpha)\cap\B_{R\mu_\alpha}(x_0)}\frac{u_\alpha^{\crit}}{d_g(x,x_0)^s}dv_g = o(1). $$
Therefore,  from the last relation and \eqref{eqt-15.1-Main-Th-Sharp-Cst-H-S-ineq}, we get \eqref{eqt-14-Main-Th-Sharp-Cst-H-S-ineq}. This ends the proof in the Case 2.2.2.

\medskip\noindent Now, we consider a family $(\Omega_\alpha, \exp_{y_\alpha}^{-1})_{\alpha>0}$ of exponential charts centered at $y_\alpha$ and 
we define on $\B_{R_0\hat{r}_\alpha^{-1}}(0) \subset \R^n$, $R_0 \in (0, i_g(M))$ the function $\bar{u}_\alpha(X) = \hat{r}_\alpha^{\frac{n}{2}-1}u_\alpha(\exp_{y_\alpha}(\hat{r}_\alpha X))$ and the metric 
$\bar{g}_\alpha(X) = \exp_{y_\alpha}^*g(\hat{r}_\alpha X)$. Using the same arguments of Step 1.2, we prove that there exists $\bar{u} \in D_1^2(\R^n)$ such that $\bar{u}_\alpha \to \bar{u}$ weakly in $D_1^2(\R^n)$ as $\alpha \to +\infty$.
To prove that $\bar{u}$ is non vanishing, we need the following Lemma : 

\begin{lemma}\label{lemma-Step2-Main-Th-Sharp-Cst-H-S-ineq}
The sequence $(\bar{u}_\alpha)_{\alpha>0}$ is $C^0$-bounded on any compact in $\R^n$. 
\end{lemma}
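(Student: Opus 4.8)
The plan is to establish uniform local $C^0$-boundedness of the rescaled family $(\bar{u}_\alpha)$ by the same Moser/De Giorgi-type iteration already used twice in the excerpt (Step 1.1 and Proposition~\ref{prop-1-Th-2-Sharp-Cst-H-S-ineq}), namely Theorem~4.1 in Han-Lin \cite{Han-Lin}. First I would record the equation satisfied by $\bar{u}_\alpha$: writing $x=\exp_{y_\alpha}(\hat{r}_\alpha X)$ and differentiating the defining relation, the same computation as in Step~1.0 shows that
\begin{equation*}
\Delta_{\bar{g}_\alpha}\bar{u}_\alpha + \alpha\hat{r}_\alpha^2\,\bar{u}_\alpha
= \lambda_\alpha\,\frac{\bar{u}_\alpha^{2^\star(s)-1}}{d_{\bar{g}_\alpha}(X,\bar{X}_{0,\alpha})^s},
\end{equation*}
where $\bar{X}_{0,\alpha}=\hat{r}_\alpha^{-1}\exp_{y_\alpha}^{-1}(x_0)$ locates $x_0$ in the rescaled chart. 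The crucial normalization is that by \eqref{eqt-14.1-Main-Th-Sharp-Cst-H-S-ineq}, the function $x\mapsto d_g(x_0,x)^{\frac{n}{2}-1}u_\alpha(x)$ attains its supremum at $y_\alpha$, and this scale-invariant control is exactly what will yield a pointwise bound on $\bar{u}_\alpha$ on fixed balls.

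Next I would make that last point precise. For $X\in\B_R(0)$ we have $x=\exp_{y_\alpha}(\hat{r}_\alpha X)$ with $d_g(x,y_\alpha)\le C\hat{r}_\alpha R$, so by the triangle inequality $d_g(x_0,x)\ge d_g(x_0,y_\alpha)-C\hat{r}_\alpha R$. Using the maximizing property \eqref{eqt-14.1-Main-Th-Sharp-Cst-H-S-ineq} together with the definition $\hat{r}_\alpha=u_\alpha(y_\alpha)^{-2/(n-2)}$ (so that $\hat{r}_\alpha^{\frac{n}{2}-1}u_\alpha(y_\alpha)=1$), one obtains
\begin{equation*}
\bar{u}_\alpha(X)=\hat{r}_\alpha^{\frac{n}{2}-1}u_\alpha(x)
\le \left(\frac{d_g(x_0,y_\alpha)}{d_g(x_0,x)}\right)^{\frac{n}{2}-1}
\le C_R
\end{equation*}
on any fixed ball, once one checks $d_g(x_0,x)\ge \tfrac12 d_g(x_0,y_\alpha)$ for $\alpha$ large; this dichotomy splits naturally according to whether $|\bar{X}_{0,\alpha}|$ stays bounded or escapes to infinity, precisely mirroring the two cases of Step~1.1. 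In the escaping case the weight $d_{\bar{g}_\alpha}(X,\bar{X}_{0,\alpha})^{-s}$ is uniformly small on $\B_R(0)$ and the bound is immediate; in the bounded case the scale-invariant maximizing property delivers the quotient estimate directly. The key nonlinear coefficient $F_\alpha:=\lambda_\alpha \bar{u}_\alpha^{2^\star(s)-2}/d_{\bar{g}_\alpha}(X,\bar{X}_{0,\alpha})^s$ is then bounded in $L^r(\B_{2R}(0))$ for $r\in(\tfrac{n}{2},\tfrac{n}{2-s})$, exactly because the $C^0$-bound on $\bar{u}_\alpha$ tames the supercritical power and the singular weight is $L^r$-integrable for such $r$.

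With $\Delta_{\bar{g}_\alpha}\bar{u}_\alpha\le F_\alpha\bar{u}_\alpha$ and $\Vert F_\alpha\Vert_{L^r(\B_{2R}(0))}=O(1)$, I would invoke Theorem~4.1 of Han-Lin \cite{Han-Lin} (equivalently Lemma~\ref{lemma-2-Appendix-Sharp-Cst-H-S-ineq} in the Appendix) to conclude
$\max_{\B_R(0)}\bar{u}_\alpha\le C\Vert\bar{u}_\alpha\Vert_{L^2(\B_{2R}(0))}$, and since $(\bar{u}_\alpha)$ is bounded in $D_1^2(\R^n)$ hence in $L^2_{loc}$, the right-hand side is $O(1)$. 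A covering argument extends this to every compact of $\R^n$, which is the assertion of the lemma. I expect the main obstacle to be the careful handling of the comparison $d_g(x_0,x)\asymp d_g(x_0,y_\alpha)$ uniformly over $\B_R(0)$: one must verify that the maximizing property \eqref{eqt-14.1-Main-Th-Sharp-Cst-H-S-ineq} genuinely controls $\bar{u}_\alpha$ even when $x_0$ sits inside the rescaled ball (the bounded-$\bar{X}_{0,\alpha}$ case), where $x$ may approach $x_0$ and the naive quotient bound degenerates. Resolving this requires using the full strength of the scale-invariant supremum rather than a crude distance comparison, and this is the step where the choice of weight exponent $\frac{n}{2}-1$ is essential.
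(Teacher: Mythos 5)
Your core estimate --- the quotient bound $\bar{u}_\alpha(X) \le \left(d_g(x_0,y_\alpha)/d_g(x_0,x)\right)^{\frac{n}{2}-1}$ coming from the maximizing property \eqref{eqt-14.1-Main-Th-Sharp-Cst-H-S-ineq} combined with the triangle inequality --- is exactly the paper's argument, but you leave unproved the one fact that closes it, and your fallback for that fact is wrong. The missing observation is that the contradiction hypothesis \eqref{eqt-14.0-Main-Th-Sharp-Cst-H-S-ineq} is \emph{precisely} the statement $\hat{r}_\alpha = o\bigl(d_g(x_0,y_\alpha)\bigr)$: since $u_\alpha(y_\alpha) = \hat{r}_\alpha^{-\frac{n-2}{2}}$, one has $d_g(x_0,y_\alpha)^{\frac{n}{2}-1}u_\alpha(y_\alpha) = \bigl(d_g(x_0,y_\alpha)/\hat{r}_\alpha\bigr)^{\frac{n-2}{2}}$, so \eqref{eqt-14.0-Main-Th-Sharp-Cst-H-S-ineq} gives $\hat{r}_\alpha/d_g(x_0,y_\alpha) \to 0$. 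This immediately yields $d_g(x_0,x) \ge d_g(x_0,y_\alpha) - R\hat{r}_\alpha \ge (1-o(1))\,d_g(x_0,y_\alpha)$ uniformly on $\B_R(0)$, hence $\bar{u}_\alpha \le 1 + o(1)$ there; that is the entire proof in the paper. You instead defer the verification of $d_g(x_0,x)\ge\tfrac12 d_g(x_0,y_\alpha)$ to a dichotomy on $|\bar{X}_{0,\alpha}|$, and you have the two cases backwards: when $|\bar{X}_{0,\alpha}|$ stays bounded, $x_0$ sits inside a fixed rescaled ball, $x$ may approach $x_0$, and the quotient bound degenerates (it does not ``deliver the estimate directly''); it is in the escaping case that the quotient bound works. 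What actually saves the argument is that the bounded case cannot occur, because $|\bar{X}_{0,\alpha}| = d_g(x_0,y_\alpha)/\hat{r}_\alpha \to +\infty$ by the computation above. Your closing paragraph concedes that the degenerate case is ``the main obstacle'' and gestures at ``the full strength of the scale-invariant supremum'' without giving an argument, so as written the proof is incomplete at exactly the point where \eqref{eqt-14.0-Main-Th-Sharp-Cst-H-S-ineq} must be invoked.

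A secondary remark: the entire Han--Lin/Moser layer is redundant. Once the quotient bound is uniform on $\B_R(0)$, the lemma is proved; feeding that bound into $F_\alpha$, checking $\Vert F_\alpha\Vert_{L^r} = O(1)$ and invoking Theorem 4.1 of \cite{Han-Lin} merely re-derives a $C^0$ bound you already hold. In the paper this elliptic step appears \emph{after} the lemma, where the $C^0$ bound is used as input to prove $\bar{u}\not\equiv 0$ from $\bar{u}_\alpha(0)=1$, not to prove the lemma itself. Note also that your exponent range $r \in (\tfrac{n}{2},\tfrac{n}{2-s})$ does not guarantee integrability of the weight $d^{-sr}$ when $s>1$; the requirement is $r<\tfrac{n}{s}$.
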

Indeed, by \eqref{eqt-14.1-Main-Th-Sharp-Cst-H-S-ineq} we have that  
\begin{equation}\label{eqt-1-lemma-Step2-Main-Th-Sharp-Cst-H-S-ineq}
\bar{u}_\alpha(X) \leq \left(\frac{d_g(x_0, y_\alpha)}{d_g(x_0, \exp_{y_\alpha}(\hat{r}_\alpha X))}\right)^{\frac{n}{2}-1}
\end{equation}
for all $X \in \B_{R_0\hat{r}_\alpha^{-1}}(0)$. Given $R > 0$, we get for all $X \in \B_R(0)$ that 
$$d_g(x_0, \exp_{y_\alpha}(\hat{r}_\alpha X)) \geq d_g(x_0, y_\alpha) -  R\hat{r}_\alpha.$$
By \eqref{eqt-1-lemma-Step2-Main-Th-Sharp-Cst-H-S-ineq} and the last triangular inequality, we get  for all $X \in \B_R(0)$ that 
\begin{equation}\label{eqt-2-lemma-Step2-Main-Th-Sharp-Cst-H-S-ineq}
\bar{u}_\alpha(X) \leq \left(1 - R\frac{\hat{r}_\alpha}{d_g(x_0, y_\alpha)}\right)^{\frac{-2}{n-2}}.
\end{equation}
By \eqref{eqt-14.0-Main-Th-Sharp-Cst-H-S-ineq}, we have that $d_g(x_0, y_\alpha)^{-1}\hat{r}_\alpha = o(1)$. 
Combining this last relation with \eqref{eqt-2-lemma-Step2-Main-Th-Sharp-Cst-H-S-ineq}, we get for all $X \in \B_R(0)$, that $\bar{u}_\alpha(X) \leq 1 + o(1)$
 in $C^0(\B_R(0))$. This ends the proof of the Lemma.  

\medskip\noindent Since $\bar{u}_\alpha(0) = 1$ for all $\alpha > 0$ then using the same arguments of Step 1.1 and Lemma \ref{lemma-Step2-Main-Th-Sharp-Cst-H-S-ineq}, we obtain by 
Theorem 4.1 in Han-Lin \cite{Han-Lin}  (see also Lemma \ref{lemma-1-Appendix-Sharp-Cst-H-S-ineq} in the Appendix) that there exists $C_{24} > 0, r > 0$ independent of $\alpha$ such that 
$\|\bar{u}_\alpha\|_{L^2(\B_r(0))} \geq C_{24}$. Letting $\alpha \to +\infty$ in the last relation, we deduce that $\bar{u} \not\equiv 0$. Similarly, we prove as in Step 1.7 that $\hat{u}_\alpha \to \hat{u}$ in $C^0_{loc}(\R^n)$.

\medskip\noindent Coming back to \eqref{eqt-14-Main-Th-Sharp-Cst-H-S-ineq}, we write that for any $\alpha > 0$ that 
\begin{equation}\label{eqt-19.1-Main-Th-Sharp-Cst-H-S-ineq}
 \int_{\B_1(0)}\frac{\bar{u}_\alpha^{2^*(s)}}{d_{\bar{g}_\alpha}(X, X_{0,\alpha})^s}dv_{\bar{g}_\alpha} = \int_{\B_{\hat{r}_\alpha}(y_\alpha)}\frac{u_\alpha^{2^*(s)}}{d_g(x,x_0)^s}dv_g = o(1) + \varepsilon_R,  
\end{equation}  
where the function $\varepsilon_R : \R \to \R$ verifies  $\lim_{R\rightarrow+\infty}\varepsilon_R = 0$. Letting $\alpha \to +\infty$ then $R \to +\infty$  in the last relation, we then get : 
$\int_{\B_1(0)}|X|^{-s}\bar{u}^{\crit}dX = 0$. Contradiction since $\bar{u} \in C^0(\B_1(0))$ and $\bar{u}(0) = \lim_{\alpha\to 0}\bar{u}_\alpha(0) = 1$. This ends Step 2.2.\end{proof}

\medskip\noindent{\bf Step 2.3:} Here goes the final argument (we adapt the one in Druet \cite{Druet-1} and in Hebey \cite{Hebey-1} to our case). We fix $\rho \in (0, i_g(M))$ sufficiently small. We consider a smooth cut-off function $\eta$ on $M$ such that 
$0 \leq \eta \leq 1$, $\eta \equiv 1$ on $\B_\rho(x_0)$ and $\eta \equiv 0$ on  $M\setminus\B_{2\rho}(x_0)$. We define the function $\eta_0$ on $\B_{2\rho}(x_0)$ by 
$\eta_0 = \eta\circ\exp_{x_0}^{-1}$. We let $dx = (\exp_{x_0}^{-1})^*dX$ and $\tilde{\delta}_0 = (\exp_{x_0}^{-1})^*\delta$.
We consider two constants, $C_{25}, C_{26} > 0$, independents of $\alpha$ such that 
$|\nabla\eta_0|_g \leq C_{25}$ et $|\Delta_g\eta_0|_g \leq C_{26}$.
The sharp Euclidean Hardy-Sobolev inequality gives for all $\alpha > 0$ that 
$$\left(\int_{\R^n}\frac{|\eta(u_\alpha\circ\exp_{x_0})|^{2^*(s)}}{|X|^s}dX\right)^{\frac{2}{2^*(s)}} 
\leq K(n,s)\int_{\R^n}|\nabla(\eta(u_\alpha\circ\exp_{x_0}))|_\delta^2dX. $$
This implies that for all $\alpha > 0$ we have : 
\begin{equation}\label{eqt-5.1-Main-Th-Sharp-Cst-H-S-ineq}
\left(\int_M\frac{|\eta_0u_\alpha|^{2^*(s)}}{d_{\tilde{\delta}_0}(x,x_0)^s}dx\right)^{\frac{2}{2^*(s)}} 
\leq K(n,s)\int_M|\nabla(\eta_0u_\alpha)|_{\tilde{\delta}_0}^2dx.
\end{equation}
In order to get a contradiction, we estimate the RHS (respectively the LHS) of the Equation \eqref{eqt-5.1-Main-Th-Sharp-Cst-H-S-ineq}, by comparing 
the $L^2$- norm of  $|\nabla(\eta_0u_\alpha)|_{\tilde{\delta}_0}$ (resp. the $L^{\crit}$- norm of  $\eta_0u_\alpha$) with respect to $\tilde{\delta}_0$ with the 
$L^2$- norm of  $|\nabla u_\alpha|_g$ (resp. the $L^{\crit}$- norm of  $u_\alpha$) with respect to $g$. We let $r_0(x) = d_g(x, x_0)$ be the 
geodesic distance to $x_0$. Cartan's expansion of the metric $g$ (see \cite{Lee-parker}) in the exponential chart $(\B_{2\rho}(x_0), exp_{x_0}^{-1})$ yields
\begin{eqnarray}\label{eqt-5.2-Main-Th-Sharp-Cst-H-S-ineq}
\int_M|\nabla(\eta_0u_\alpha)|_{\tilde{\delta}_0}^2dx &=& \int_M(1 + C_{27}r_0^2(x))|\nabla(\eta_0u_\alpha)|_g^2(1 + C_{28}r_0^2(x))dv_g\nonumber
\\ &\leq& \int_M|\nabla(\eta_0u_\alpha)|_g^2dv_g + C_{29}\int_Mr_0^2(x)|\nabla(\eta_0u_\alpha)|_g^2dv_g \nonumber
\\ &\leq&   \int_M|\nabla(\eta_0u_\alpha)|_g^2dv_g + \int_Mr_0^2\eta_0^2|\nabla u_\alpha|_g^2dv_g \nonumber
\\ &&  + C_{30}\int_{M\setminus\B_{\rho}(x_0)}u_\alpha^2dv_g,
\end{eqnarray}
where $C_i > 0, i = 27, \ldots, 30$ are independent of $\alpha$.
Independently, we get by integrating by parts that 
\begin{eqnarray}\label{eqt-5.3-Main-Th-Sharp-Cst-H-S-ineq}
\int_M|\nabla(\eta_0u_\alpha)|_g^2dv_g &=& \int_M\eta_0^2|\nabla u_\alpha|_g^2dv_g + \int_M\eta_0\Delta_g\eta u_\alpha^2dv_g
\\ &\leq& \int_M|\nabla u_\alpha|_g^2dv_g + C_{26} \int_Mu_\alpha^2dv_g
\end{eqnarray}
We let now $f_0 := \eta_0^2r_0^2$ which is a smooth function. So that
\begin{equation}\label{eqt-5.6-Main-Th-Sharp-Cst-H-S-ineq}
\int_M\eta_0^2r_0^2|\nabla u_\alpha|_g^2dv_g = \int_M(\nabla(f_0u_\alpha) -  u_\alpha\nabla f_0, \nabla u_\alpha)_gdv_g.
\end{equation}
Multiplying equation \eqref{eqt-2-Main-Th-Sharp-Cst-H-S-ineq} by $f_0u_\alpha$ then integrating by parts over $M$, we get :
\begin{eqnarray}\label{eqt-5.7-Main-Th-Sharp-Cst-H-S-ineq}
\int_M(\nabla(f_0 u_\alpha), \nabla u_\alpha)_gdv_g &=& \int_M(\Delta_gu_\alpha)f_0 u_\alpha dv_g \nonumber
\\ &\leq& \lambda_\alpha\int_M\frac{f_0 u_\alpha^{2^*(s)}}{d_g(x,x_0)^s}dv_g.
\end{eqnarray}
By Step 2.2, there exists a constant $C_{31} > 0$ independent of  $\alpha$ such that we have for all $x \in M$ : 
\begin{equation}\label{eqt-5.8-Main-Th-Sharp-Cst-H-S-ineq} 
u_\alpha^{\crit}(x)d_g(x,x_0)^{2-s}  \leq C_{31}u_\alpha^2(x).
\end{equation}
Since $\lambda_\alpha \in (0, K(n,s)^{-1})$ then by \eqref{eqt-5.7-Main-Th-Sharp-Cst-H-S-ineq} et \eqref{eqt-5.8-Main-Th-Sharp-Cst-H-S-ineq}, we get : 
\begin{equation}\label{eqt-5.9-Main-Th-Sharp-Cst-H-S-ineq}
\int_M(\nabla(f_0 u_\alpha), \nabla u_\alpha)_gdv_g \leq C_{32}\int_Mu_\alpha^2(x)dv_g,
\end{equation}
where $C_{32} > 0$ is a constant  independent of $\alpha$.
Integrating by parts gives 
\begin{equation}\label{eqt-5.10-Main-Th-Sharp-Cst-H-S-ineq}
\int_M(\nabla f_0, \nabla u_\alpha)_gu_\alpha dv_g  = \int_M\frac{1}{2}u_\alpha\Delta_g(f_0)dv_g \leq C_{33}\int_Mu_\alpha^2dv_g,\end{equation}
where $C_{33} > 0$ is a constant  independent of $\alpha$.
Plugging  \eqref{eqt-5.10-Main-Th-Sharp-Cst-H-S-ineq} and \eqref{eqt-5.9-Main-Th-Sharp-Cst-H-S-ineq} into \eqref{eqt-5.6-Main-Th-Sharp-Cst-H-S-ineq},
we get that 
\begin{equation}\label{eqt-5.11-Main-Th-Sharp-Cst-H-S-ineq}
\int_M\eta_0^2r_0^2|\nabla u_\alpha|_g^2dv_g \leq C_{34}\int_Mu_\alpha^2dv_g,
\end{equation}
where $C_{34} > 0$ is a constant  independent of $\alpha$. Therefore \eqref{eqt-5.2-Main-Th-Sharp-Cst-H-S-ineq} yields
\begin{equation}\label{eqt-5.12-Main-Th-Sharp-Cst-H-S-ineq}
\int_M|\nabla(\eta_0u_\alpha)|_{\tilde{\delta}_0}^2dx \leq \int_M|\nabla u_\alpha|_g^2dv_g + C_{35}\int_Mu_\alpha^2dv_g,
\end{equation}
where the constants $C_{35} > 0$ is independent of $\alpha$.

\medskip\noindent On the other hand, we know by Gau\ss's Lemma that $d_{\tilde{\delta}_0}(x,x_0) = d_g(x,x_0) = |\exp_{x_0}^{-1}(x)|$.
Writing that $dx = dv_g + (1 - \sqrt{det(g)})dx$ and thanks to Cartan's expansion (see Lee-Parker \cite{Lee-parker}), we obtain 
\begin{equation}\label{eqt-5.13-Main-Th-Sharp-Cst-H-S-ineq}
\int_M\frac{|\eta_0u_\alpha|^{2^*(s)}}{d_{\tilde{\delta}_0}(x,x_0)^s}dx \geq \int_M\frac{|\eta_0u_\alpha|^{2^*(s)}}{d_g(x,x_0)^s}dv_g  - \int_M\frac{|\eta_0u_\alpha|^{2^*(s)}}{d_g(x,x_0)^s}C_{36}r_0^2(x)dx, 
\end{equation}where the constants $C_{36} > 0$ is independent of $\alpha$.
With \eqref{eqt-5.8-Main-Th-Sharp-Cst-H-S-ineq}  and \eqref{Step-3-Main-Th-Sharp-Cst-H-S-ineq}, we have that 
\begin{eqnarray}\label{eqt-5.16-Main-Th-Sharp-Cst-H-S-ineq}
\int_M\frac{r_0^2(x)|\eta_0u_\alpha|^{2^*(s)}}{d_g(x,x_0)^s}dx  &\leq& C_{37}\int_M\left(u_\alpha(x)d_g(x,x_0)^{\frac{n}{2}-1}\right)^{\frac{2(2-s)}{n-2}}u_\alpha^2(x)dv_g\nonumber
\\ &\leq& C_{38}\int_Mu_\alpha^2(x)dv_g,
\end{eqnarray}
where $C_{37}, C_{38} > 0$ is a constant  independent of $\alpha$. Since we have for all $\alpha > 0$, we have that $\int_Md_g(x,x_0)^{-s}|\eta_0u_\alpha|^{2^*(s)}dv_g \leq 1$ and, up to a subsequence, 
that $\int_Mu_\alpha^2dv_g = o(1)$, it then follows with \eqref{eqt-5.13-Main-Th-Sharp-Cst-H-S-ineq} and \eqref{eqt-5.16-Main-Th-Sharp-Cst-H-S-ineq} that   
\begin{eqnarray}\label{eqt-5.18-Main-Th-Sharp-Cst-H-S-ineq}
\left(\int_M\frac{|\eta_0u_\alpha|^{2^*(s)}}{d_g(x,x_0)^s}dx\right)^{\frac{2}{2^*(s)}} &\geq& \left(\int_M\frac{|\eta_0u_\alpha|^{2^*(s)}}{d_g(x,x_0)^s}dv_g - C_{38}\int_Mu_\alpha^2(x)dv_g\right)^{\frac{2}{2^*(s)}} \nonumber
\\  &\geq&  \int_M\frac{|\eta_0u_\alpha|^{2^*(s)}}{d_g(x,x_0)^s}dv_g - C_{38}\int_Mu_\alpha^2(x)dv_g.
\end{eqnarray}
Now the definition of $\eta_0$ gives that 
\begin{equation}\label{eqt-5.14.0-Main-Th-Sharp-Cst-H-S-ineq}
\int_M\frac{|\eta_0u_\alpha|^{2^*(s)}}{d_g(x,x_0)^s}dv_g \geq  \int_M\frac{u_\alpha^{2^*(s)}}{d_g(x,x_0)^s}dv_g - \int_{M\setminus\B_\rho(x_0)}\frac{u_\alpha^{2^*(s)}}{d_g(x,x_0)^s}dv_g.
\end{equation}
Since $u_\alpha \to 0$ as $\alpha \to +\infty$ in $C^0_{loc}(M\setminus\{x_0\})$ (Proposition \ref{prop-1-Th-2-Sharp-Cst-H-S-ineq}), there exists 
a constant $C_{39} > 0$ independent of  $\alpha$ such that
\begin{eqnarray}\label{eqt-5.14-Main-Th-Sharp-Cst-H-S-ineq}
\int_{M\setminus\B_\rho(x_0)}\frac{u_\alpha^{2^*(s)}}{d_g(x,x_0)^s}dv_g &\leq& 
\sup_{x\in M\setminus\B_\rho(x_0)}\left(\frac{u_\alpha^{2^*(s)-2}}{d_g(x,x_0)^s}\right)\int_{M\setminus\B_\rho(x_0)}u_\alpha^2dv_g\nonumber
\\ &\leq&  C_{39}\int_Mu_\alpha^2dv_g.
\end{eqnarray}
Combining \eqref{eqt-2-Main-Th-Sharp-Cst-H-S-ineq}, \eqref{eqt-5.14.0-Main-Th-Sharp-Cst-H-S-ineq} and \eqref{eqt-5.14-Main-Th-Sharp-Cst-H-S-ineq} yields 
\begin{eqnarray}\label{eqt-5.15-Main-Th-Sharp-Cst-H-S-ineq}
\int_M\frac{|\eta_0u_\alpha|^{2^*(s)}}{d_g(x,x_0)^s}dv_g &\geq&  \frac{1}{\lambda_\alpha}\left(\int_M|\nabla u_\alpha|_g^2dv_g + \alpha\int_Mu_\alpha^2dv_g\right) \nonumber
\\ && \  - C_{39}\int_Mu_\alpha^2dv_g.
\end{eqnarray}
 Plugging  \eqref{eqt-5.15-Main-Th-Sharp-Cst-H-S-ineq}  into \eqref{eqt-5.18-Main-Th-Sharp-Cst-H-S-ineq}, and using that $\lambda_\alpha<K(n,s)^{-1}$, we obtain that 
\begin{equation}\label{eqt-5.17-Main-Th-Sharp-Cst-H-S-ineq}
\left(\int_M\frac{|\eta_0u_\alpha|^{2^*(s)}}{d_g(x,x_0)^s}dx\right)^{\frac{2}{2^*(s)}}  \geq   K(n,s)\int_M|\nabla u_\alpha|_g^2dv_g + (\alpha K(n,s) - C_{40})\int_Mu_\alpha^2dv_g,
\end{equation}
where $C_{40} > 0$ is a constant  independent of $\alpha$. Combining \eqref{eqt-5.12-Main-Th-Sharp-Cst-H-S-ineq}, \eqref{eqt-5.17-Main-Th-Sharp-Cst-H-S-ineq} with \eqref{eqt-5.1-Main-Th-Sharp-Cst-H-S-ineq}, we then get : 
\begin{equation}\label{eqt-5.19-Main-Th-Sharp-Cst-H-S-ineq}
(C_{41} - \alpha)\int_Mu_\alpha^2dv_g \geq 0,
\end{equation} 
where $C_{41} > 0$ is a constant  independent of $\alpha$. Contradiction since $\alpha \to +\infty$. This ends the proof of Theorem \ref{Main-Th-Sharp-Cst-H-S-ineq}.


\medskip\noindent{\bf Proof of \eqref{min:B0}.} We write $B:=B_0(M,g,s,x_0)$ for simplicity. It follows from \eqref{eqt-1.0-Main-Th-Sharp-Cst-H-S-ineq} and the definition \eqref{eqt-1-Th-2-Sharp-Cst-H-S-ineq} of $I_\alpha$ that 
\begin{equation}\label{maj:opt}
K(n,s)^{-1}\leq \inf_{u\in H_1^2(M)\setminus\{0\}}I_{K(n,s)^{-1}B}(u)
\end{equation}
We define the test-function sequence $(u_\epsilon)_{\epsilon>0}$ by
$$u_\epsilon(x) = \left(\frac{\epsilon^{1-\frac{s}{2}}}{\epsilon^{2-s} + d_g(x,x_0)^{2-s}}\right)^{\frac{n-2}{2-s}}\hbox{ for }n\geq 4 $$
for all $\epsilon > 0$ and $x \in M$. When $n=3$, the let $G_{x_0}$ be the Green's function for the coercive operator $\Delta_g+K(3,s)^{-1}B$, and we define $\beta_{x_0}:=G_{x_0}-\eta d_g(\cdot, x_0)^{-1}$ where $\eta$ is a cut-off function around $x_0$. Note that $\beta_{x_0}\in C^0(M)$, and the masse of $G_{x_0}$ is $\beta_{x_0}(x_0)$. We define for any $\epsilon>0$
$$u_\epsilon(x)=\eta(x)\left(\frac{\epsilon^{1-\frac{s}{2}}}{\epsilon^{2-s} + d_g(x,x_0)^{2-s}}\right)^{\frac{n-2}{2-s}}+\sqrt{\epsilon} \beta (x)\hbox{ for }n=3$$
for all $x\in M$.
It follows from \cite{jaber:test:fcts} that 
$$I_{K(n,s)^{-1}B}(u_\epsilon) =  K(n,s)^{-1}+\gamma_n \Omega_n(x_0)\theta_\epsilon+o(\theta_\epsilon)$$
when $\epsilon\to 0$, where $\gamma_n>0$ for all $n\geq 3$, $\theta_\epsilon:=\epsilon^2$ if $n \geq 5$, $\theta_\epsilon:=\epsilon^{2}\ln(\epsilon^{-1})$ if $n = 4$, $\theta_\epsilon:=\epsilon$ if $n = 3$, and
$$ \Omega_n(x_0):=K(n,s)^{-1}B - \frac{(n-2)(6-s)}{12(2n-2-s)}\hbox{Scal}_g(x_0)\hbox{ if }n\geq 4\, ;\, \Omega_3(x_0) :=-\beta_{x_0}(x_0).$$
It then follows from \eqref{maj:opt} that $\Omega_n(M,g,s,x_0)\geq 0$. This proves \eqref{min:B0}.


\section{Appendix} Following arguments as in Han and Lin \cite{Han-Lin} (see Theorem 4.1), we have that

\begin{lemma}\label{lemma-1-Appendix-Sharp-Cst-H-S-ineq} 
Let $\B_2(0)$ be the ball in $\R^n$ of center $0$ and radius $2$, $\tilde{g}$ be a Riemannian on $\B_2(0)$ and let $A = A(\tilde{g}) > 0$ be such that for all $\phi \in C_c^\infty(\B_2(0)) $, we have : 
$$\|\phi\|_{L^{2^*}_{\tilde{g}}(\B_1(0))} \leq A\|\nabla\phi\|_{L^2_{\tilde{g}}(\B_1(0))},$$
where $L^2_{\tilde{g}}$ is the Lebesgue space of $(\B_1(0), dv_{\tilde{g}})$. We consider $u \in H_1^2(\B_1(0), \tilde{g}), u \geq 0$ a.e. such that we have $ \Delta_{\tilde{g}}u \leq fu$, on $H_{1,0}^2(\B_1(0), \tilde{g})$
and $\int_{\B_1(0)}|f|^rdv_{\tilde{g}} \leq k$
with $r > \frac{n}{2}$ and $k > 0$ is a constant depending of $(M, g), f, r$ . Then $u \in L^\infty_{loc}(\B_1(0)) $. Moreover, for all $p > 0$, there exists a constant 
$C_{42} = C(n, p,r,\tilde{g}, k)$ such that for all $\theta\in ]0,1[ $ we have :
$$\sup_{\B_\theta(0)}u \leq C_{42}\frac{1}{(1-\theta)^{\frac{n}{p}}}\|u\|_{L^p_{\tilde{g}}(\B_1(0))}.$$
\end{lemma}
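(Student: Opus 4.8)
The plan is to run a \emph{Moser iteration} adapted to the fixed smooth metric $\tilde g$ on the ball. Since $\tilde g$ is smooth on the compact ball $\overline{\B_1(0)}$, its volume element and the induced norms are uniformly comparable to the Euclidean ones, so all the constants below depend only on $n,p,r,\tilde g$ and $k$; this reduces the problem to the flat Moser scheme with an additional zeroth-order term governed by $f$. First I would record the weak formulation: from $\Delta_{\tilde g}u\leq fu$ on $H_{1,0}^2(\B_1(0),\tilde g)$ one has, for every nonnegative $\varphi\in H_{1,0}^2(\B_1(0),\tilde g)$,
$$\int_{\B_1(0)}\langle\nabla u,\nabla\varphi\rangle_{\tilde g}\,dv_{\tilde g}\leq\int_{\B_1(0)}fu\varphi\,dv_{\tilde g}.$$
For $\beta\geq 1$ and $L>0$ I would set $\bar u:=\min(u,L)$ and test against $\varphi=\eta^2u\,\bar u^{2(\beta-1)}$, where $\eta$ is a cut-off equal to $1$ on the smaller ball and supported in the larger one. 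Expanding $\nabla\varphi$ and absorbing the mixed gradient terms by Young's inequality yields a Caccioppoli-type bound for $w:=\eta\,u\,\bar u^{\beta-1}$, namely
$$\int|\nabla w|_{\tilde g}^2\,dv_{\tilde g}\leq C\beta^2\left(\int|\nabla\eta|_{\tilde g}^2\,u^2\bar u^{2(\beta-1)}\,dv_{\tilde g}+\int\eta^2|f|\,u^2\bar u^{2(\beta-1)}\,dv_{\tilde g}\right).$$

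The crucial step, and the only genuinely delicate point, is the control of the zeroth-order term $\int\eta^2|f|u^2\bar u^{2(\beta-1)}$. Rewriting it as $\int|f|\,w^2$ and applying H\"older with exponents $r$ and $r'=r/(r-1)$ bounds it by $k^{1/r}\|w\|_{L^{2r'}}^2$. Because $r>\frac{n}{2}$ one has $2r'<2^*=\frac{2n}{n-2}$ \emph{strictly}, so $\|w\|_{L^{2r'}}$ interpolates between $\|w\|_{L^2}$ and $\|w\|_{L^{2^*}}$ with a positive power on the $L^2$ factor; Young's inequality then lets me absorb the $L^{2^*}$ contribution into the left-hand side after invoking the assumed Sobolev inequality $\|w\|_{L^{2^*}_{\tilde g}(\B_1(0))}\leq A\|\nabla w\|_{L^2_{\tilde g}(\B_1(0))}$. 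This is exactly where the hypothesis $r>\frac{n}{2}$ is indispensable. Letting $L\to+\infty$ by monotone convergence produces the reverse-H\"older inequality
$$\left(\int_{\B_{\rho'}(0)}u^{2\chi\beta}\,dv_{\tilde g}\right)^{1/\chi}\leq C\beta^2(\rho-\rho')^{-2}\int_{\B_\rho(0)}u^{2\beta}\,dv_{\tilde g},\qquad\chi:=\frac{2^*}{2}=\frac{n}{n-2}>1,$$
valid for concentric balls $\B_{\rho'}(0)\subset\B_\rho(0)\subset\B_1(0)$.

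Finally I would iterate. Choosing exponents $2\beta_j=p\chi^{j}$ and radii $\rho_j=\theta+(1-\theta)2^{-j}$, so that $\rho_0=1$ and $\rho_j\downarrow\theta$, the reverse-H\"older inequality chains into
$$\|u\|_{L^{p\chi^{j+1}}(\B_{\rho_{j+1}}(0))}\leq\prod_{i=0}^{j}\left(C\beta_i^2(\rho_i-\rho_{i+1})^{-2}\right)^{1/(2\beta_i)}\|u\|_{L^{p}(\B_1(0))}.$$
Since $\beta_i$ grows geometrically, $\sum_i\beta_i^{-1}\log(C\beta_i^2)$ converges; moreover $\sum_i(2\beta_i)^{-1}=\frac{n}{2p}$, so the radius factors accumulate to $(1-\theta)^{-n/p}$ times a convergent product. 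Letting $j\to+\infty$ gives $\sup_{\B_\theta(0)}u\leq C_{42}(1-\theta)^{-n/p}\|u\|_{L^p(\B_1(0))}$, which in particular yields $u\in L^\infty_{loc}(\B_1(0))$. The iteration above starts at the exponent $p$; for $0<p<2$ I would instead run it from the exponent $2$ and then lower the starting exponent by the standard interpolation-and-hole-filling lemma, using $\int u^2\leq(\sup u)^{2-p}\int u^p$ together with Young's inequality to trade the supremum on a slightly larger ball against the $L^p$ norm, recovering the same power $(1-\theta)^{-n/p}$.
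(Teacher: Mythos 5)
Your proposal is correct and is essentially the paper's own argument: the paper proves this lemma simply by invoking Theorem 4.1 of Han--Lin, and the Moser iteration you carry out (truncated test functions $\eta^2 u\,\bar u^{2(\beta-1)}$, absorption of the $L^r$ potential term via H\"older, interpolation and the assumed Sobolev inequality using $r>\frac{n}{2}$, then the geometric choice of exponents $p\chi^j$ and radii $\theta+(1-\theta)2^{-j}$ producing the factor $(1-\theta)^{-n/p}$, with the standard interpolation trick to reach $0<p<2$) is precisely the proof of that cited theorem. The only cosmetic imprecision is that the absorption of the potential term makes the reverse-H\"older constant grow like a fixed polynomial power of $\beta$ rather than exactly $\beta^2$, which changes nothing since the iterated product still converges.
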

\noindent We use another version of this lemma adapted for compact Riemannian manifolds. 

\begin{lemma}\label{lemma-2-Appendix-Sharp-Cst-H-S-ineq} 
Let $(M,g)$ be a compact Riemannian Manifold. We consider $u \in H_1^2(M), u \geq  0$. We fix an open domain $\Omega$ of $M$ and assume that $u$ verifies 
$$ \left\{ \begin{array}{lll} \Delta_g u \leq f u, \ \ {\rm on} \ \Omega \ {\rm in \ the \ sense \ of \ distributions} \hbox{ ,} \\
                 \int_\Omega|f|^rdv_{\tilde{g}} \leq C_{43}, \ \ r > \frac{n}{2} \hbox{,} \end{array}\right.$$ 
 with $C_{43} = C_{43}(M,g,f,r)$ then for all $\omega \subset\subset \Omega$ and all $p > 0$, there exists $C_{44} = C_{44}(M,g,C_{43},p,r,\Omega,\omega) > 0$ (independent of $u$) such that 
 $$\|u\|_{L^\infty(\omega)} \leq C_{44}\|u\|_{L^p(\Omega)}. $$ 
\end{lemma}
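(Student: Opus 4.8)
The plan is to deduce the statement on $(M,g)$ from the local Euclidean version, Lemma \ref{lemma-1-Appendix-Sharp-Cst-H-S-ineq}, by a standard covering argument through exponential charts. Since $\omega\subset\subset\Omega$ and $M$ is compact, I would first cover $\overline{\omega}$ by finitely many geodesic balls $\B_{r_i/2}(y_i)$, $i=1,\dots,N$, chosen so that $\B_{2r_i}(y_i)\subset\Omega$ and $2r_i<i_g(M)$; extracting a finite subcover of $\overline{\omega}$ is the only place compactness is needed. It then suffices to bound $\|u\|_{L^\infty(\B_{r_i/2}(y_i))}$ by $C\|u\|_{L^p(\Omega)}$ for each $i$, with a constant independent of $u$, and to take the maximum of the finitely many constants at the end.

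The core of the argument is to transport the hypotheses into a fixed Euclidean ball. For each $i$ I would use the rescaled exponential chart $\psi_i\colon \B_2(0)\to\B_{2r_i}(y_i)$, $\psi_i(X)=\exp_{y_i}(r_i X)$, and set $\tilde{g}_i:=r_i^{-2}\psi_i^*g$, $\tilde{u}_i:=u\circ\psi_i$ and $\tilde{f}_i:=r_i^2\,(f\circ\psi_i)$. Three facts must be checked, all routine. First, the Laplace--Beltrami operator is invariant under the diffeomorphism $\psi_i$, while the constant rescaling $\tilde{g}_i=r_i^{-2}\psi_i^*g$ multiplies it by $r_i^2$; since test functions pull back to test functions, $\Delta_{\tilde{g}_i}\tilde{u}_i=r_i^2(\Delta_g u)\circ\psi_i\le \tilde{f}_i\tilde{u}_i$ holds weakly on $\B_2(0)$. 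Second, by the change of variables $dv_{\tilde{g}_i}=r_i^{-n}\psi_i^*dv_g$ one gets
$$\int_{\B_1(0)}|\tilde{f}_i|^r\,dv_{\tilde{g}_i}=r_i^{2r-n}\int_{\psi_i(\B_1(0))}|f|^r\,dv_g\le r_i^{2r-n}C_{43},$$
which is bounded uniformly in $i$ precisely because the assumption $r>n/2$ forces $2r-n>0$; this is where the integrability hypothesis on $f$ is essential. Third, since $(M,g)$ is smooth and only finitely many charts are used, the metrics $\tilde{g}_i$ are uniformly comparable to the Euclidean metric $\delta$ on $\B_2(0)$, which yields the Sobolev inequality required in Lemma \ref{lemma-1-Appendix-Sharp-Cst-H-S-ineq} with a constant $A(\tilde{g}_i)$ controlled independently of $u$.

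With these verifications, Lemma \ref{lemma-1-Appendix-Sharp-Cst-H-S-ineq} applies on each $\B_2(0)$ with the metric $\tilde{g}_i$ and the exponent $p$, giving $\sup_{\B_{1/2}(0)}\tilde{u}_i\le C\,\|\tilde{u}_i\|_{L^p_{\tilde{g}_i}(\B_1(0))}$. Undoing the change of variables through $\|\tilde{u}_i\|_{L^p_{\tilde{g}_i}(\B_1(0))}=r_i^{-n/p}\|u\|_{L^p(\psi_i(\B_1(0)))}\le r_i^{-n/p}\|u\|_{L^p(\Omega)}$, and using that $\psi_i(\B_{1/2}(0))=\B_{r_i/2}(y_i)$ to translate the supremum back to $M$, yields the desired bound on each piece of the cover. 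Taking $C_{44}$ to be the maximum over $i=1,\dots,N$ of the resulting constants completes the proof.

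I do not expect a serious obstacle: the argument is a localization combined with scaling. The only point requiring genuine care is to confirm that every constant produced along the way depends solely on the admissible data $(M,g,C_{43},p,r,\Omega,\omega)$ and never on $u$ itself. This holds because the constant in Lemma \ref{lemma-1-Appendix-Sharp-Cst-H-S-ineq} depends only on $n,p,r$, the metric $\tilde{g}_i$ and the bound $k$, all of which are controlled uniformly over the finite cover, and because the favorable power $r_i^{2r-n}$ keeps the pulled-back $L^r$ bound on $\tilde{f}_i$ bounded.
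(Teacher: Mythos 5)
Your proof is correct and is essentially the argument the paper has in mind: Lemma \ref{lemma-2-Appendix-Sharp-Cst-H-S-ineq} is stated there without proof, as the manifold adaptation of Lemma \ref{lemma-1-Appendix-Sharp-Cst-H-S-ineq} (Han--Lin, Theorem 4.1), and your finite covering of $\overline{\omega}$ by rescaled exponential charts, with the pullback of the weak inequality $\Delta_g u\leq fu$ and of the $L^r$ bound on $f$, is exactly that adaptation. One minor remark: since the cover is finite with fixed radii $r_i$, the positivity of $2r-n$ is not actually needed for uniformity in $i$ (it would matter only if the radii were allowed to degenerate), but this emphasis does not affect the correctness of your argument.
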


\begin{bibdiv}
\begin{biblist}

\bib{Aubin-1}{article}{
   author={Aubin, T.},
   title={Probl\`emes isop\'erim\'etriques et espaces de Sobolev},
   journal={J. Math. Pures Appl.},
   volume={11},
   date={1976},
   number={3},
   pages={573--598},
}

\bib{aubinli}{article}{
   author={Aubin, Th.},
   author={Li, Y.-Y.},
   title={On the best Sobolev inequality},
   journal={J. Math. Pures Appl. (9)},
   volume={78},
   date={1999},
   number={4},
   pages={353--387},
   }

\bib{badialetarantello}{article}{
   author={Badiale, M.},
   author={Tarantello, G.},
   title={A Sobolev-Hardy inequality with applications to a nonlinear
   elliptic equation arising in astrophysics},
   journal={Arch. Ration. Mech. Anal.},
   volume={163},
   date={2002},
   number={4},
   pages={259--293},
   }

\bib{brouttelande}{article}{
   author={Brouttelande, Ch.},
   title={The best-constant problem for a family of Gagliardo-Nirenberg
   inequalities on a compact Riemannian manifold},
   journal={Proc. Edinb. Math. Soc. (2)},
   volume={46},
   date={2003},
   number={1},
   pages={117--146},
}
\bib{Caff-K-Nir}{article}{
   author={Caffarelli, L.},
   author={Kohn, R.},
   author={Nirenberg, L.},
   title={First order interpolation inequality with weights},
   journal={Composito Math.},
   volume={53},
   date={1984},
   pages={259--275}
}
\bib{catrinawang}{article}{
   author={Catrina, F.},
   author={Wang, Z.-Q.},
   title={On the Caffarelli-Kohn-Nirenberg inequalities: sharp constants,
   existence (and nonexistence), and symmetry of extremal functions},
   journal={Comm. Pure Appl. Math.},
   volume={54},
   date={2001},
   number={2},
   pages={229--258},
   }
\bib{cecconmontenegro}{article}{
   author={Ceccon, J.},
   author={Montenegro, M.},
   title={Optimal Riemannian $L^p$-Gagliardo-Nirenberg inequalities
   revisited},
   journal={J. Differential Equations},
   volume={254},
   date={2013},
   number={6},
   pages={2532--2555},
}

\bib{Chou-Chu}{article}{
   author={Chou, K.S.},
   author={Chu, C. W.},
   title={On the best constant for a weighted  Sobolev-Hardy  inequality},
   journal={J. London Math. Soc. (2)},
   volume={48},
   date={1993},
   pages={137--151}
}

\bib{DELT}{article}{
   author={Dolbeault, J.},
   author={Esteban, M.J.},
   author={Loss, M.},
   author={Tarantello, G.},
   title={On the symmetry of extremals for the Caffarelli-Kohn-Nirenberg inequalities},
   journal={Adv. Nonlinear Stud.},
   volume={9},
   date={2009},
   number={4},
   pages={713--726},
 }
\bib{Druet-1}{article}{
   author={Druet, O.},
   title={The best constants problem in Sobolev inequalities},
   journal={Math. Ann.},
   volume={314},
   date={1999},
   pages={327--346}
}
\bib{Rob-Pu-F.Rob}{article}{
   author={Fillippucci, R.},
   author={Pucci, P.},
   author={Robert, F.},
   title={On a $p$-Laplace equation with multiple critical nonlinearities},
   journal={J. Math. Pures Appl.},
   volume={91},
   date={2009},
   pages={156--177}
}
\bib{ghoussoubmoradifam}{book}{
   author={Ghoussoub, N.},
   author={Moradifam, A.},
   title={Functional inequalities: new perspectives and new applications},
   series={Mathematical Surveys and Monographs},
   volume={187},
   publisher={American Mathematical Society},
   place={Providence, RI},
   date={2013},
  }
\bib{Gil-Tru}{book}{
   author={Gilbarg, G.},
   author={Trudinger, N.S.},
   title={Elliptic Partial Differential Equations of Second Order, Second edition},
   publisher={Grundlehren der mathematischen Wissenschaften, Springer, Berlin},
   volume={224},
   date={1983},
}
\bib{Han-Lin}{book}{
   title={Elliptic partial differential equations},
   author={Han, Q.},
   author={Lin, F.},
   publisher={CIMS Lecture Notes, Courant Institute of Mathematical Sciences, Vol. 1, 1997. 
    Second edition published by the American Mathematical Society, 2000.}
   }
\bib{Hebey-4}{article}{
   author={Hebey, E.},
   title={Fonctions extr\'emales pour une in\'egalit\'e de Sobolev Optimale dans la classe conforme de la Sph\`ere},
   journal={Jour. Math. Pures Appl.},
   volume={77},
   date={1998},
   pages={721--733}
}
\bib{Hebey-3}{book}{
   title={Non linear analysis on Manifolds : Sobolev spaces and inequalities},
   author={Hebey, E.},
   date={2001},
   publisher={American Mathematical Society, Collection : Courant lecture notes in mathematics},
   adress={5 New york University, Courant institut of Mathematics sciences, New york}
}
\bib{Hebey-1}{article}{
   author={Hebey, E.},
   title={Sharp Sobolev inequalities of Second Order},
   journal={The journal of Geometric Analysis},
   volume={13},
   number={1},
   date={2003},
   pages={145--162}
}
\bib{Hebey-Vaugon}{article}{
   author={Hebey, E.},
   author={Vaugon, M.}
   title={The best constant problem in the Sobolev embedding theorem for complete Riemannian Manifolds},
   journal={Duke Math. J.},
   volume={79},
   date={1995},
   pages={235--279}
}
\bib{Horiushi}{article}{
   author={Horiuchi, T.},
   title={Best constant in weighted Sobolev inequality with weights being powers of distance from the origin},
   journal={J. Inequal. Appl.},
   volume={1},
   number={3}, 
   date={1997},
   pages={275--292}
}
\bib{jaber:test:fcts}{article}{
   author={Jaber, H.},
   title={Hardy-Sobolev equations on compact Riemannian Manifolds},
   note={Preprint}
   date={2013}
}
\bib{Lee-parker}{article}{
   author={Lee, J.},
   author={Parker, T.}
   title={The Yamabe problem},
   journal={Bull. Amer. Math. Soc. (N.S.)},
   volume={17},
   date={1987},
   pages={37--91}
   number={1}
}
\bib{Lieb-1}{article}{
   author={Lieb, E.H.},
   title={Sharp constants in the Hardy-Littlewood-Sobolev and related inequalities},
   journal={Ann. of Mathematics},
   volume={118},
   date={1983},
   pages={349--374}
}
\bib{Maz'ja}{book}{
   author={Maz'ja, V. G.},
   title={Sobolev spaces},
   publisher={Springer-Verlag, Berlin},
   date={1985},
}
\bib{Musina}{article}{
   author={Musina, R.},
   title={Existence of extremals for the Maz'ya and for the
   Caffarelli-Kohn-Nirenberg inequalities},
   journal={Nonlinear Anal.},
   volume={70},
   date={2009},
   number={8},
   pages={3002--3007},
}
\bib{Rodemich}{article}{
   author={Rodemich, E.},
   title={The Sobolev inequalities with best possible constant},
   journal={ Analysis seminar at California Institute of technology},
   date={1966},
}
\bib{Talenti}{article}{
   author={Talenti, G.},
   title={Best constant in Sobolev inequality},
   journal={Ann. di Matem. Pura ed Appl.},
   volume={110},
   date={1976},
   pages={353--372},
}
\bib{Tert-Tint}{article}{
   author={Tertikas, A.},
   author={Tintarev, K.},
   title={On the existence of minimizers for the  Hardy-Sobolev-Maz'ya  inequality},
   journal={Ann. Mat. Pura Appl.},
   date={2006},
}
\bib{Elhadji-A.T.}{article}{
   author={Thiam, E.H.A.},
   title={Hardy and Hardy-Soboblev Inequalities on Riemannian Manifolds},
   date={2013},
   note={Preprint}
}
\end{biblist}
\end{bibdiv}

\end{document}